\newtheorem{theorem}{Theorem}[section]
\theoremstyle{definition}
\newtheorem{lemma}[theorem]{Lemma}
\newtheorem{corollary}[theorem]{Corollary}
\newtheorem{proposition}[theorem]{Proposition}
\newtheorem{remark}[theorem]{Remark}
\newtheorem{example}[theorem]{Example}
\newcommand{\R}{\mathbb{R}}
\newcommand{\C}{\mathcal{C}}
\newcommand{\E}{\mathcal{E}}
\renewcommand{\H}{\mathcal{H}}
\renewcommand{\S}{\mathcal{S}}
\newcommand{\eps}{\varepsilon}
\newcommand{\proj}{\mbox{\rm proj}}
\newcommand{\co}{\mbox{\rm co}}
\newcommand{\argmin}[1][]{ \underset{#1}{\mbox{\rm argmin}} \ }
\newcommand{\Min}[1][]{ \underset{#1}{\mbox{\rm MIN}} \ }
\newcommand{\IMOG}{\mbox{{\rm (IMOG)}}}
\newcommand{\MOG}{\mbox{{\rm (MOG)}}}
\DeclareSymbolFont{fouriersymbols}{FMS}{futm}{m}{n}
\DeclareSymbolFont{fourierlargesymbols}{FMX}{futm}{m}{n}
\DeclareMathDelimiter{\VERT}{\mathord}{fouriersymbols}{152}{fourierlargesymbols}{147}
\DeclareFontFamily{U}{matha}{\hyphenchar\font45}
\DeclareFontShape{U}{matha}{m}{n}{
      <5> <6> <7> <8> <9> <10> gen * matha
      <10.95> matha10 <12> <14.4> <17.28> <20.74> <24.88> matha12
      }{}
\DeclareSymbolFont{matha}{U}{matha}{m}{n}
\DeclareMathSymbol{\leq}         {3}{matha}{"A4}
\DeclareMathSymbol{\geq}         {3}{matha}{"A5}
\DeclareMathSymbol{\nleq}        {3}{matha}{"A6}
\DeclareMathSymbol{\ngeq}        {3}{matha}{"A7}
\DeclareMathSymbol{\lneq}        {3}{matha}{"AC}
\DeclareMathSymbol{\gneq}        {3}{matha}{"AD}
\DeclareMathSymbol{\leqq}        {3}{matha}{"AE}
\DeclareMathSymbol{\geqq}        {3}{matha}{"AF}
\DeclareFontFamily{U}{mathb}{\hyphenchar\font45}
\DeclareFontShape{U}{mathb}{m}{n}{
      <5> <6> <7> <8> <9> <10> gen * mathb
      <10.95> mathb10 <12> <14.4> <17.28> <20.74> <24.88> mathb12
}{}
\DeclareSymbolFont{mathb}{U}{mathb}{m}{n}
\DeclareMathSymbol{\prec}        {3}{mathb}{"A0}
\DeclareMathSymbol{\preceq}      {3}{mathb}{"A8}
\DeclareMathSymbol{\precneq}     {3}{mathb}{"AC}
\DeclareMathSymbol{\llcurly}     {3}{mathb}{"CE}
\begin{document}

\title{Multiobjective optimization : an inertial dynamical approach to Pareto optima}

\author{H\'edy Attouch \& Guillaume Garrigos}

\address{Institut de Math\'ematiques et Mod\'elisation de Montpellier, UMR 5149 CNRS, Universit\'e de Montpellier, place Eug\`ene Bataillon, 34095 Montpellier cedex 5, France}

\email{hedy.attouch@univ-montp2.fr; guillaume.garrigos@gmail.com}

\vspace{0.5cm}


\maketitle

\markboth{H\'edy Attouch \& Guillaume Garrigos}
  {Multiobjective optimization: an inertial dynamic}

\maketitle

\if{\color{red}TODO : 
write introduction and first section with definition of Paretos, steepest descent direction etc..

rewrite asymptotic part and existence with general initial time $t_0$ instead of 0

talk about uniqueness. Can we get generic uniqueness in the space of smooth functions from $\R^n$ to $\R^q$? In Baire sense?

 existence in convex case
 
  examples with numerical simulations, and exploration of Paretos?
  
  talk about general differential inclusion
  
algorithm

studying what happen when $m \to 0$. 

Tikhonov regularization to get the projection of some initial point onto the set of Paretos?

}\fi

\textbf{Abstract.} We present some first results concerning a gradient-based dynamic approach to multi-objective optimization problems, involving inertial effects. We prove the existence of global solution trajectories for this second-order differential equation, and their convergence to weak Pareto points in the convex case. It is a first step towards the design of fast numerical methods for multi-objective optimization.

\section{Introduction}

We propose a first study of an inertial gradient-based dynamical system for multi-objective optimization. 
 In a Hilbert space setting $\mathcal H$, given  objective functions $f_i : \mathcal H \rightarrow \mathbb R$, \ $i=1,2,...,q$, which are  continuously differentiable, we consider the Inertial Multi-Objective Gradient system 
\begin{equation*}
{\rm (IMOG)} \ \  m \ddot u(t) + \gamma \dot u(t) 
+  \co \nabla f_i(u(t))^0 =0 .
\end{equation*}
It is a second-order in time differential equation, where  the mass $m$ and the viscous damping coefficient $\gamma$  are fixed positive parameters, and $\co \nabla f_i(u(t))^0$ is the element of minimal norm of the convex hull of the gradients of the objective functions at $u(t)$.
This dynamical system combines both aspects, inertial and multi-objective. Each of them has been the subject of active research, but to our knowledge the combination of both aspects has not been considered before.
Let us review some important facts concerning each of these aspects separately.

\smallskip

a) When neglecting the acceleration term, we  recover the first-order system
\begin{equation*}
{\rm (MOG)} \quad \dot u(t) 
+ \co \nabla f_i(u(t))^0 =0,
\end{equation*}
 which has been first considered by  Henry \cite{Hen}, Cornet \cite{Corn1}-\cite{Corn3} in economics, as a dynamical mechanism of ressource allocation. 
It was then developed independently as an optimization tool by Miglierina \cite{Mig04}, Brown and Smith \cite{BrS}, Attouch and Goudou \cite{AttGou14}. Its extension to the nonsmooth setting has been recently considered in Attouch, Garrigos and Goudou \cite{AttGarGou15}.
As a main property of (MOG), along its trajectories all the objective functions are decreasing. In the quasi-convex case, the trajectories converge  as $t \to + \infty$ to Pareto optima.\\
Various  first-order algorithms for multi-objective optimization can be considered as the time discretization  
 of this dynamic: let us refer to the seminal work of
 Fliege and Svaiter \cite{FliSva00},  followed by  \cite{BenFerOli12,DruIus04,DruRauSva14,DruSva05,
 MigMolRec08} (and the references therein).
Newton-based methods  have been considered by Drummond, Fliege and Svaiter \cite{DruFliSva09}, see also the multiobjective BFGS method of Povalej \cite{Pov14}.

\smallskip

b) When considering a single criteria $f$, we recover the so-called Heavy Ball with Friction dynamic 
\begin{equation*}
{\rm(HBF)} \ \ m \ddot u(t) + \gamma \dot u(t) + \nabla f(u(t))=0.
\end{equation*}
This system has a clear mechanical interpretation. Just  like a heavy ball sliding down the graph of $f$, due to the viscous friction effect, each trajectory tends to stabilize at a local minimum of $f$.
As an optimization tool, this system was first considered by  Polyack \cite{Poly},  Antipin \cite{Ant}, and Attouch-Goudou-Redont \cite{AttGouRed00}.
The convergence property of the trajectories has been proven in the two basic situations, in the case $f$ real analytic by  Haraux and Jendoubi \cite{HJ}, and in the convex case by Alvarez \cite{Al}.
The basic motivation for considering second-order in time systems is that, intuitively inertia provides fast methods.
The study of fast gradient-based methods for solving optimization problems is an active area of research.
While first-order methods are well-understood and relatively easy to implement, they generally suffer from slow convergence rates (linear). 
In contrast, second-order methods usually enjoy fast convergence properties, as super-linear or quadratic.

Generally speaking, there are two ways to incorporate  second-order information in the dynamics, or the algorithms. A second-order analysis in space (using the Hessian of the objective function, or some approximation),  leads to Newton-like methods. 
The other approach, which is our  main concern,  consists in using a second-order information in time, by introducing in the dynamics the second-order derivative of the trajectory. 
Such inertial methods are easier to implement than the Newton-like methods, but their analysis is quite  delicate. 
Following Nesterov and G\"uler seminal work, a very popular 
method is the  FISTA algorithm which has been developed by  Beck and Teboulle \cite{BT}, and which is an inertial version of the classical forward-backward algorithm. As a remarkable property of this algorithm, the  
 convergence rate of the values is $\mathcal O (\frac{1}{k^2})$.
Recently  Su, Boyd and Candes \cite{SBC} showed that this algorithm can be interpreted as a discrete version of the second-order differential equation, in the case $\alpha =3$,
 \begin{equation*}
  \ddot u(t) + \frac{\alpha}{t} \dot u(t) + \nabla f(u(t))=0.
\end{equation*}
 In the above equation, the viscous coefficient $\frac{\alpha}{t}$ tends to zero as $t\to +\infty$, which makes the inertia effect more effective asymptotically than with a fixed positive viscous coefficient, a key for fast methods (see Cabot, Engler and Gaddat \cite{CEG} for a general view on the asymptotic vanishing damping effect).
Convergence of the trajectories of the above system has been obtained in the case $\alpha >3$ by Attouch-Peypouquet-Redont \cite{APR1} (continuous dynamic) and Chambolle-Dossal \cite{CD} (discrete algorithmic case).

\smallskip

As a general rule, comparison of the corresponding continuous and discrete dynamics is of interest, since they usually share asymptotically the same qualitative and quantitative properties: convergence to a critical point of the objective function, with similar convergence rate.
For a rigorous approach of this comparison, see the works of Alvarez-Peypouquet \cite{AlvPey10,AlvPey11}, and Peypouquet-Sorin \cite{PeySor10}.
Moreover the continuous dynamic is often easier to treat mathematically than the corresponding algorithms, since we can use the flexibility of the differential and integral calculus. Quite often Lyapunov functions are first discovered in the continuous case, and then transposed to the algorithms.

Thus our program consists in studying the  Inertial Multi-Objective Gradient system, by combining the  technics which have been described above.
In section 2, we recall briefly some aspects of the multiobjective optimization, and in particular the first-order steepest descent method evoked above. Then we introduce our dynamic, namely the Inertial MultiObjective Gradient (IMOG) system.
In section 3, we investigate the existence of solution trajectories of (IMOG) in finite dimensions.
In section 4, we study the properties of the trajectories generated by (IMOG). Under a convexity assumption on the objective functions, we show that the bounded trajectories converge to weak Pareto points of the problem.
Of course, due to the effects of inertia, (IMOG) is not a descent dynamic, i.e. the values of the cost functions may not decrease over time.
But we show that, with an appropriate choice of the initial velocity, the cost values are improved along the trajectory relative to the starting point.

\section{Pareto Optimality and Multi-objective steepest descent direction}

Let $f_i : \H \longrightarrow \R$ ($i\in\{1,...,q\}$) be a finite family of real-valued functions. We suppose in this paper that they are continuously differentiable, with gradients being Lipschitz continuous on bounded sets. 
Note $F : \H \longrightarrow \R^q$ the vector-valued function defined by $F(u):=(f_i(u))_{i \in \{1,...,q\}}$, and consider the associated vector optimization problem
\begin{equation*}
{\rm (P)} \quad \Min[u \in \H] F(u).
\end{equation*}
Let us precise the notions of solutions we consider for $(P)$. 

Take the canonical order $\preceq$ on $\R^q$ defined by
\begin{equation}
a \preceq b \Leftrightarrow \ \forall i \in \{1,...,q\}, \ a_i \leq b_i,
\end{equation}
which induces a strict order $a \precneq b \Leftrightarrow a \preceq b$ and $a \neq b$. 
Then, we say that $u \in \H$ is a \textit{Pareto efficient point} (or \textit{Pareto optimum)} of $(P)$ whenever the sublevel set $\{ v \in \H \ | \ F(v) \precneq F(u) \}$ is empty. 
In other words, Pareto optimum are points having the property that none of the objective functions can be improved in value, without degrading some of the other objective values.
\noindent We can also equip $\R^q$ with a weaker strict order $\prec$, defined by
\begin{equation}
a \prec b \Leftrightarrow \ \forall i \in \{1,...,q\}, \ a_i < b_i.
\end{equation}
To this weaker strict order corresponds a weaker notion of Pareto efficiency : we say that $u \in \H$ is a \textit{weak Pareto efficient point} (or \textit{weak Pareto optimum)} of $(P)$,  if $\{ v \in \H \ | \ F(v) \prec F(u) \}$ is empty.
It is clear that any Pareto optimum is in particular a weak Pareto optimum. 
Note that in the case of a single objective function $f$ (i.e. $q=1$), Pareto and weak Pareto optima  coincide with the notion of global minimizer of $f$. 

Still in this mono-criteria case, a known necessary condition for $u\in \H$ to be a minimizer is $\nabla f(u)=0$. A generalization of this Fermat's rule holds for Pareto optima :

\begin{proposition}{(Fermat's rule)}\label{P:Fermat}
If $u$ is a weak Pareto point of $(P)$, then $0 \in \co \nabla f_i(u)$.
\end{proposition}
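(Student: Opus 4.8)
The plan is to argue by contraposition: assume $0 \notin \co \nabla f_i(u)$ and construct a point $v$ near $u$ with $F(v) \prec F(u)$, contradicting weak Pareto optimality. The set $C := \co\{\nabla f_1(u), \dots, \nabla f_q(u)\}$ is a nonempty compact convex subset of $\H$ (it is the convex hull of finitely many points), so if $0 \notin C$ the Hahn--Banach separation theorem yields a direction $d \in \H$ and $\alpha > 0$ with $\langle g, d\rangle \ge \alpha$ for every $g \in C$; in particular $\langle \nabla f_i(u), d\rangle \ge \alpha > 0$ for each $i \in \{1,\dots,q\}$. Equivalently, one may take $d = -\,\co \nabla f_i(u)^0$, the negative of the minimal-norm element, which is a classical common descent direction: a standard computation with the projection characterization gives $\langle \nabla f_i(u), d\rangle \le -\|\co \nabla f_i(u)^0\|^2 < 0$ for all $i$, so $-d$ works as the separating direction.

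Next I would use differentiability of each $f_i$ at $u$ in the direction $d$. For each $i$,
\begin{equation*}
f_i(u + t d) = f_i(u) + t\,\langle \nabla f_i(u), d\rangle + o(t) \le f_i(u) - t\big(\tfrac{\alpha}{2}\big)
\end{equation*}
for all $t > 0$ small enough, say $0 < t < t_i$. Taking $t^* := \min_i t_i > 0$ (a finite minimum over $q$ indices) and any $t \in (0, t^*)$, we get $f_i(u + t d) < f_i(u)$ simultaneously for all $i$, i.e. $F(u + td) \prec F(u)$. Hence the set $\{v \in \H \mid F(v) \prec F(u)\}$ is nonempty, so $u$ is not a weak Pareto point.

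The only mildly delicate point is handling the $o(t)$ uniformly across the finitely many objectives, which is harmless precisely because $q$ is finite: one chooses the common threshold $t^*$ as the minimum of the individual thresholds. In infinite dimensions the separation step requires $C$ to be weakly compact, which again holds since $C$ is the convex hull of finitely many points and hence finite-dimensional and norm-compact. No smoothness beyond Fr\'echet differentiability at the single point $u$ is needed, so the hypotheses of the section (continuous differentiability with locally Lipschitz gradients) are more than sufficient. I do not expect any real obstacle here; the result is essentially the first-order optimality condition obtained by linearizing along a common descent direction.
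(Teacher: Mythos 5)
The paper does not actually prove Proposition \ref{P:Fermat}: it is stated as a known fact, with the notion of Pareto criticality attributed to Smale \cite{Sma73} and Cornet \cite{Corn1} and the related material deferred to \cite{AttGarGou15}. So there is no in-paper argument to compare against; judged on its own, your proof is correct and is the standard one: separate $0$ from the compact convex set $\co\{\nabla f_i(u)\}$ (or, equivalently, use the variational characterization of the minimal-norm element, which is exactly the common-descent inequality \eqref{E:common descent property} of the paper), then use first-order expansions along the resulting common descent direction and take the minimum of the finitely many thresholds to get $F(u+td)\prec F(u)$, contradicting weak Pareto optimality. The only blemish is notational: in your first paragraph $d$ is the separating (ascent) direction with $\langle \nabla f_i(u),d\rangle\geq\alpha>0$, while the Taylor expansion in the second paragraph silently switches to the descent direction $d=-\co\nabla f_i(u)^0$ with $\alpha=\Vert\co\nabla f_i(u)^0\Vert^2$; with one consistent choice of sign the argument is complete. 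Your remarks on why compactness and the finiteness of $q$ make the infinite-dimensional and uniformity issues harmless are accurate.
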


\noindent By analogy with the mono-criteria case, we say that $u$ is a \textit{critical Pareto} point whenever $0 \in \co \nabla f_i(u)$.
This notion has been considered by Smale in \cite{Sma73}, Cornet in \cite{Corn1}, \ see \cite{BaoMor12} for recent account of this notion, and various extensions of it. 
As we can expect, this first-order necessary optimality condition for local multi-objective optimization becomes  sufficient in the convex setting (see for instance \cite[Lemma 1.3]{AttGarGou15}) :

\begin{proposition}\label{P:Fermat CNS}
If each objective function $(f_i)_{i\in\{1,...,q\}}$ is convex, then Pareto critical points coincide with  weak Pareto points. If the functions are strictly convex, then the same holds for Pareto optima.
\end{proposition}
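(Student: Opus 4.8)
The plan is to prove both implications of Proposition~\ref{P:Fermat CNS} by exploiting convexity in the form of the gradient inequality. One direction is free: by Proposition~\ref{P:Fermat}, every weak Pareto point is Pareto critical, and this needs no convexity. So the work is the converse. Suppose $u$ is Pareto critical, i.e. $0 \in \co \nabla f_i(u)$, so there exist multipliers $\lambda_i \geq 0$ with $\sum_{i=1}^q \lambda_i = 1$ and $\sum_{i=1}^q \lambda_i \nabla f_i(u) = 0$. Assume for contradiction that $u$ is not a weak Pareto point: there is $v \in \H$ with $f_i(v) < f_i(u)$ for every $i$. The convexity of each $f_i$ gives the subgradient inequality $f_i(v) \geq f_i(u) + \inner{\nabla f_i(u)}{v - u}$, hence $\inner{\nabla f_i(u)}{v-u} \leq f_i(v) - f_i(u) < 0$ for all $i$. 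Multiplying by $\lambda_i$ and summing yields $\inner{\sum_i \lambda_i \nabla f_i(u)}{v-u} < 0$ (the strict inequality survives because at least one $\lambda_i > 0$ and the corresponding term is strictly negative). But the left-hand side is $\inner{0}{v-u} = 0$, a contradiction. Hence $u$ is a weak Pareto point.

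For the second assertion, assume each $f_i$ is strictly convex and let $u$ be Pareto critical; I want to show $u$ is a Pareto optimum, i.e. there is no $v$ with $F(v) \precneq F(u)$. Suppose such a $v$ exists: $f_i(v) \leq f_i(u)$ for all $i$, with strict inequality for at least one index, and in particular $v \neq u$. Strict convexity gives the strict gradient inequality $f_i(v) > f_i(u) + \inner{\nabla f_i(u)}{v-u}$ for every $i$ (since $v \neq u$), so $\inner{\nabla f_i(u)}{v-u} < f_i(v) - f_i(u) \leq 0$ for all $i$. Again multiplying by $\lambda_i$ and summing gives $0 = \inner{\sum_i \lambda_i \nabla f_i(u)}{v-u} < 0$, a contradiction.

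The argument is elementary once the right contradiction hypothesis is set up, so there is no serious obstacle; the only point requiring a little care is making sure the strict inequality is preserved after taking the convex combination. In the first part this works because $\inner{\nabla f_i(u)}{v-u} < 0$ holds for \emph{every} $i$, so every term with $\lambda_i>0$ is strictly negative and the convex combination is strictly negative (using $\sum_i \lambda_i = 1 > 0$). In the second part the key subtlety is that one must invoke the \emph{strict} form of the convexity inequality, valid precisely because $v\neq u$ — which is guaranteed by $F(v)\precneq F(u)$ forcing $v\neq u$. I would present the two parts in parallel since they differ only in which gradient inequality (non-strict vs.\ strict) is used and in which sign condition on the $f_i(v)-f_i(u)$ is available.
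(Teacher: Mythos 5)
Your proof is correct, and it is the standard argument: the paper itself does not prove this proposition but delegates it to \cite[Lemma 1.3]{AttGarGou15}, whose proof is exactly this scalarization-plus-gradient-inequality contradiction. The only (harmless) omission is that for the second assertion you should also note the easy direction — a Pareto optimum is in particular a weak Pareto optimum, hence Pareto critical by Proposition \ref{P:Fermat} — so that the two notions genuinely \emph{coincide}.
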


\noindent These last properties justifies the search for critical Pareto points, just as critical points are looked for in the single objective case.


~~

We introduce now  the \textit{steepest descent} vector field :
\begin{eqnarray}\label{E:steepest descent}
s : \H & \longrightarrow &\H \\
  u& \longmapsto & s(u):=-\co \nabla f_i(u)^0 \nonumber
\end{eqnarray}
where $\co \nabla f_i(u)^0$ denotes the element of minimal norm of the convex compact set $\co \nabla f_i(u)$. The vector $s(u)$ is called the \textit{multi-objective steepest descent direction} at $u$, and simply reduces to $- \nabla f(u)$ if $q=1$. 
It enjoys the following nice properties, which extends known facts about $-\nabla f(u)$ in the mono-criteria case :
\begin{enumerate}
	\item $s(u)=0$ if and only if $u$ is Pareto critical.
	\item $s(u)$ is a \textit{common} descent direction at $u$ for all the objective functions. More exactly, 
	\begin{equation}\label{E:common descent property}
	\forall i=\{1,...,q\}, \ \langle \nabla f_i (u) , s(u) \rangle \leq - \Vert s(u) \Vert^2.
	\end{equation}
	\item It is the \textit{steepest} common descent direction, in the sense that
	\begin{equation}\label{E:steepest descent property}
	\frac{s(u)}{\Vert s(u) \Vert} = \argmin[d\in \H]  \max\limits_{i\in\{1,...,q\}} \ \langle \nabla f_i (u), d \rangle \ \text{ whenever } s(u) \neq 0. 
	\end{equation}
\end{enumerate}
Item i) is immediate since $s(u)$ is defined as the element of minimal norm of $-\co \nabla f_i (u)$. 
Item ii) comes immediately once seeing that $s(u)$ is the projection of the origin onto  $-\co \nabla f_i (u)$. 
Item iii) makes use of duality arguments, see for instance \cite[Proposition 3.1]{Corn1} or \cite[Theorem 1.8]{AttGarGou15} for more details on the proof.

Because of its properties, it seems natural to consider the dynamic governed by the steepest descent vector field $s : \H \longrightarrow \H$, namely the \textit{MultiObjective Gradient} system :
\begin{equation}
\MOG \ \ \dot u(t) - s(u(t)) =0 , \ t \in [0,+\infty[.
\end{equation}
Indeed, at each $t$, $\dot u(t)$ is a common descent direction for the objective functions, so they should all decrease along the trajectories. 
Also, equilibrium points of the dynamic are exactly critical Pareto points.
Note that the idea of constructing dynamic having these properties goes back to Smale in \cite{Sma73}.
This dynamic has been recently studied in \cite{AttGou14,AttGarGou15}, where the authors prove the \textit{cooperative} nature of this dynamic, that is the \textit{common} decrease of the objective functions along the trajectories.
Moreover, the trajectories are proved to weakly converge  to Pareto critical points, in the convex case.
This steepest descent dynamic can be seen as the continuous version of the gradient method introduced by Fliege and Svaiter in \cite{FliSva00},  which share the same qualitative and asymptotic behavior with $\MOG$.

~~

Our purpose is to consider a modified version of this steepest descent dynamic, by introducing inertial effects. 
Take $m,\gamma >0$, and consider the following \textit{Inertial MultiObjective Gradient}  dynamic:
\begin{equation}
{\rm (IMOG) }\ \ m \ddot u(t) + \gamma \dot u(t) - s(u(t)) =0, \ t \in [0,+\infty[.
\end{equation}
From a physical point of view, the parameter $m$ can be interpreted as the mass of the physical point $u(t)$, on which acts the sum of two forces : the friction $-\gamma \dot u(t)$ and the vector field $s(u(t))$. 
One can see, at least formally, that we recover the first-order steepest descent dynamic (SD) by letting the mass $m$  go to zero.
Moreover, it reduces to the classical Heavy Ball with Friction (see \cite{AttGouRed00}) when $q=1$, that is :
\begin{equation}
{\rm (HBF)} \ \ m \ddot u(t) + \gamma \dot u(t) + \nabla f(u(t)) = 0,  \ t \in [0,+\infty[.
\end{equation}
As we will see in section 4,  (IMOG) dynamic shares similar properties with  (HBF).

\if{\color{red}TO DO : Talk about $s$ used to design algorithms and continuous dynamic. Insist on the fact that the associated first-order dynamics are cooperative. Of course, because of the inertial effects, it cannot be hoped to have a common descent for IMOG. But by choosing appropriately the initial velocity, it can be shown that the values along the trajectory are improved in comparison with the initial point, see Proposition \ref{P:boundedness sufficient condition}. This property does not holds if $s(u)$ is replaced by the gradient of a fixed convex combination. Take as a counter example the distances to two points, take any non-trivial scalarization, and start from one of the two points. Then for any initial velocity, the trajectory is escapes from the initial point and strictly increases the distance to this point.
}\fi

\section{Existence of trajectories for (IMOG)}

\subsection{Existence of trajectories}

In this section, we question the existence of solutions for the Cauchy problem associated to (IMOG). 
Let $t_0 \in \R$, $T \in ]t_0,+\infty]$, and $(u_0,\dot u_0) \in \H^2$. 
We say that $u:[t_0,T [ \longrightarrow \H$ is a solution of {\rm (IMOG)} if $u$ is continuous on $[t_0,T [$, of class $C^2$ on $]t_0,T[$, and satisfies 
\if{\begin{equation*}\label{D:IMOG}
\text{(IMOG)} \ \  \ \ m \ddot u(t) + \gamma \dot u(t) + \proj_{\co \{\nabla f_i(u(t))\}}(- \alpha m \ddot u(t))=0 \ \text{ with } u(t_0)=u_0, \dot u (t_0)=\dot u_0.
\end{equation*}
We restrict our study to the case $\alpha =0$, since in this case the dynamic is governed by the autonomous vector field $s : u \in \H \mapsto \proj_{\co  \nabla f_i (u)} (0)$, making the analysis simpler. 
That is, given $(u_0,\dot u_0) \in \H^2$ and $I$ an open interval of $\R$ containing $t_0$, we look for a solution of :
$$\begin{array}{l|l}
(IMOG) \ \ & m \ddot u(t) = -\gamma \dot u(t) + s(u(t)) \ \text{ for all } t \in ]t_0,T[, \\
  & u(t_0)=u_0, \ \dot u(t_0)=\dot u_0 .
\end{array}$$
}\fi
$$\begin{array}{l|l}
\ \ & m \ddot u(t) = -\gamma \dot u(t) + s(u(t)) \ \text{ for all } t \in ]t_0,T[, \\
{\rm (IMOG)} \ \ & \\
  & u(t_0)=u_0, \ \dot u(t_0)=\dot u_0 .
\end{array}$$
In view to apply an existence theorem for the dynamical system ${\rm (IMOG)}$, the key point is the regularity of the steepest descent vector field $s$. We recall the following result from \cite{AttGou14} :

\begin{proposition}\label{P:Holder continuity}
Recalling that the gradients $\nabla f_i : \H \rightarrow \H$ are Lipschitz continuous on bounded sets, we have that $s$ is $\frac{1}{2}$-Hölder continuous on bounded sets. 
\end{proposition}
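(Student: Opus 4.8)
The plan is to show that the map $u \mapsto s(u)$ inherits Hölder-$\tfrac12$ regularity (on bounded sets) from the Lipschitz regularity (on bounded sets) of the gradients, by analyzing how the projection of the origin onto the convex hull $C(u):=\co\{\nabla f_i(u)\}$ varies with $u$. The two ingredients I would combine are: (a) a perturbation estimate for the sets $C(u)$ themselves, namely that if $u,v$ lie in a bounded set $B$ then the Hausdorff distance $d_H(C(u),C(v))$ is controlled by $L_B\|u-v\|$, where $L_B$ is the common Lipschitz constant of the $\nabla f_i$ on a suitable bounded enlargement of $B$; and (b) a quantitative stability estimate for the projection of a fixed point (here, the origin) onto a moving convex set, in terms of the Hausdorff distance between the sets.

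For (a): since $C(u)$ is the convex hull of finitely many points $\nabla f_1(u),\dots,\nabla f_q(u)$, and each of these moves at most $L_B\|u-v\|$ when passing from $u$ to $v$, every point of $C(u)$ — being a convex combination $\sum \lambda_i \nabla f_i(u)$ — is within $L_B\|u-v\|$ of the corresponding point $\sum \lambda_i \nabla f_i(v)\in C(v)$ (by convexity of the norm), and symmetrically. Hence $d_H(C(u),C(v))\le L_B\|u-v\|$. I would also note that all the sets $C(u)$ for $u\in B$ are contained in a single bounded set, which matters for step (b).

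For (b): here is where the square-root loss appears, and this is the main obstacle — or rather, the one genuinely nontrivial estimate. The point is that projection onto a convex set is $1$-Lipschitz in the point being projected but only Hölder-$\tfrac12$ with respect to perturbations of the set (measured in Hausdorff distance), even for convex sets; one cannot do better in general. Concretely, writing $p = \proj_{C(u)}(0)$ and $p' = \proj_{C(v)}(0)$, one picks $\tilde p\in C(v)$ with $\|p-\tilde p\|\le d_H(C(u),C(v))=:\delta$, uses the variational characterization $\langle 0-p', q-p'\rangle\le 0$ for all $q\in C(v)$ with $q=\tilde p$, and symmetrically a point $\tilde p'\in C(u)$ near $p'$ with $\langle 0-p, q-p\rangle \le 0$ for $q = \tilde p'$. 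Adding the two inequalities and rearranging yields $\|p-p'\|^2 \le \langle p - p', \tilde p' - p\rangle + \langle p' - p, \tilde p - p'\rangle \le 2\delta(\|p\|+\|p'\|)$ after Cauchy–Schwarz, so $\|p - p'\|\le \sqrt{2\delta}\,\sqrt{\|p\|+\|p'\|}$. Since $\|p\|,\|p'\|$ are bounded uniformly for $u,v\in B$ (by the boundedness noted in (a)), we get $\|s(u)-s(v)\| = \|p-p'\| \le K_B\sqrt{d_H(C(u),C(v))}\le K_B\sqrt{L_B}\,\|u-v\|^{1/2}$, which is the claimed Hölder-$\tfrac12$ continuity on bounded sets. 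I would present steps (a) and (b) as two short lemmas (or inline observations) and then conclude by composition; the only subtlety to be careful about is keeping all the relevant bounded enlargements consistent so that a single Lipschitz constant $L_B$ governs all the gradients simultaneously.
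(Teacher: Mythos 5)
Your argument is correct and is essentially the standard proof of this fact: the paper itself gives no proof (it recalls the result from \cite{AttGou14}), and the argument there is exactly your two-step decomposition, namely the Lipschitz estimate $d_H\bigl(\co\{\nabla f_i(u)\},\co\{\nabla f_i(v)\}\bigr)\leq L_B\Vert u-v\Vert$ for convex hulls of finitely many Lipschitz-moving points, combined with the classical $\tfrac12$-H\"older stability of the projection of a fixed point under Hausdorff perturbations of the convex set. The only blemish is cosmetic: adding the two variational inequalities actually yields $\Vert p-p'\Vert^2\leq(\Vert p\Vert+\Vert p'\Vert)\,\delta$, so your factor $2$ is an unnecessary (but harmless) loss, and your remark that the projections $p,p'$ stay in a fixed bounded set for $u,v\in B$ is precisely the point that makes the final constant uniform.
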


\noindent This result is nearly optimal since there exists simple situations for which $s$ is not a locally Lipschitz continuous vector field (see \cite[Example 2]{AttGarGou15}). Hence, there is no hope to apply Cauchy-Lipschitz's theorem to get existence and uniqueness of the trajectories. We will use instead Peano's existence result (see for instance \cite[Theorem 2.8]{AyeDomLop}) :

\begin{theorem}\label{T:Peano}(Peano)
Let $\phi : \R^n \longrightarrow \R^n$ be continuous. Then, for all $x_0 \in \H$ and $t_0 \in \R$, there exists some $T >0$ and $x : [t_0,t_0 + T[ \longrightarrow \H$ of class $C^1$, such that
\begin{equation}\label{E:Peano}
\dot x(t) = \phi(x(t)) \ \text{ for all } t\in [t_0,t_0+T[,  \text{ with } x(t_0)=x_0.
\end{equation}
\end{theorem}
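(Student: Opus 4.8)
The plan is to prove this classical Peano theorem by an explicit Euler polygon construction followed by a compactness argument, since the mere continuity of $\phi$ rules out the contraction-mapping (Picard) approach used when $\phi$ is Lipschitz.

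First I would localize. Fix $r>0$; since $\phi$ is continuous, $M := \max_{\|x-x_0\|\le r}\|\phi(x)\|$ is finite. If $M=0$ the constant map $x\equiv x_0$ already solves \eqref{E:Peano}, so assume $M>0$ and set $T := r/M$. For each $n\in\N$ partition $[t_0,t_0+T]$ into $n$ equal pieces with nodes $\tau_k^n := t_0 + kT/n$, and define the Euler polygon $x_n$ by $x_n(t_0)=x_0$ and, recursively on $[\tau_k^n,\tau_{k+1}^n]$, $x_n(t) := x_n(\tau_k^n) + (t-\tau_k^n)\,\phi(x_n(\tau_k^n))$. An immediate induction gives $\|x_n(t)-x_0\|\le M(t-t_0)\le r$ for all $t\in[t_0,t_0+T]$, so every $x_n$ is well defined on this interval with values in the closed ball $\bar B(x_0,r)$, and moreover $\|x_n(t)-x_n(s)\|\le M|t-s|$.

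Next I would invoke Arzel\`a--Ascoli: the family $(x_n)$ is uniformly bounded and uniformly Lipschitz, hence equicontinuous, on $[t_0,t_0+T]$, so some subsequence $(x_{n_j})$ converges uniformly to a continuous $x:[t_0,t_0+T]\to\bar B(x_0,r)$. To identify $x$ as a solution, rewrite each polygon in integral form $x_n(t) = x_0 + \int_{t_0}^t \phi\big(x_n(\sigma_n(s))\big)\,ds$, where $\sigma_n(s)$ is the left endpoint of the subinterval containing $s$, so $|\sigma_n(s)-s|\le T/n$. Then $\|x_{n_j}(\sigma_{n_j}(s)) - x(s)\| \le \|x_{n_j}(\sigma_{n_j}(s)) - x_{n_j}(s)\| + \|x_{n_j}(s)-x(s)\| \le MT/n_j + \|x_{n_j}-x\|_\infty \to 0$ uniformly in $s$; since $\phi$ is uniformly continuous on the compact ball $\bar B(x_0,r)$, it follows that $\phi(x_{n_j}(\sigma_{n_j}(s))) \to \phi(x(s))$ uniformly on $[t_0,t_0+T]$. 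Passing to the limit under the integral sign yields $x(t) = x_0 + \int_{t_0}^t \phi(x(s))\,ds$; as the integrand is continuous, $x$ is of class $C^1$ with $\dot x(t)=\phi(x(t))$ and $x(t_0)=x_0$, which is \eqref{E:Peano} (even on the closed interval $[t_0,t_0+T]$, a fortiori on $[t_0,t_0+T[$).

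I expect the only genuinely delicate point to be this passage to the limit: one must exploit \emph{uniform} continuity of $\phi$ on the fixed compact ball, not just pointwise continuity, to control $\phi(x_{n_j}(\sigma_{n_j}(\cdot)))$, and the uniform Lipschitz bound on the polygons is exactly what makes the mesh error $\sigma_n(s)\to s$ harmless. The remaining ingredients --- the a priori bound keeping the polygons inside $\bar B(x_0,r)$, and Arzel\`a--Ascoli --- are routine. An equally short alternative, which I would relegate to a remark, is to mollify $\phi$ into globally Lipschitz fields $\phi_\eps\to\phi$ uniformly on $\bar B(x_0,r)$, solve each regularized Cauchy problem on a common interval by the Cauchy--Lipschitz theorem, and extract a uniformly convergent subsequence in precisely the same way; the Euler construction is preferred in the main text because it is self-contained.
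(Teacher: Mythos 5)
Your proof is correct, but note that the paper does not actually prove this statement: Theorem \ref{T:Peano} is quoted as a classical result with a pointer to the literature (\cite[Theorem 2.8]{AyeDomLop}), so there is no in-paper argument to compare against. What you supply is the standard self-contained proof via Euler polygons and Arzel\`a--Ascoli, and all the steps check out: the a priori bound $\|x_n(t)-x_0\|\le M(t-t_0)\le r$ keeps the polygons in the ball where $\|\phi\|\le M$, the uniform Lipschitz bound gives equicontinuity, and the passage to the limit in the integral equation is correctly justified by the \emph{uniform} continuity of $\phi$ on the compact ball together with the mesh estimate $|\sigma_n(s)-s|\le T/n$. One point worth making explicit, since the paper's subsequent Remark dwells on it: finite-dimensionality of $\R^n$ enters your argument in exactly two places --- the closed ball $\bar B(x_0,r)$ is compact (so $M$ is finite and $\phi$ is uniformly continuous there), and Arzel\`a--Ascoli requires pointwise relative compactness of the family $(x_n(t))$, which again is supplied by compactness of the ball. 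Flagging this would tie your proof to the paper's discussion of why Peano fails (or requires a compact vector field) in infinite dimensions. The mollification alternative you sketch in closing is also valid and is the other standard route.
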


As one knows, Peano's result asks less regularity but applies only in finite dimension. Moreover, contrary to the Cauchy-Lipschitz theorem, uniqueness is not guaranteed here (it will be discussed later). The ingredients are now all gathered to get a first local existence result :

\begin{proposition}\label{P:local existence}
Suppose that $\H$ has finite dimension. For all $t_0 \in \R$, for all $(u_0,\dot u_0) \in \H \times \H$, there exists some $T >0$ and $u : [t_0,t_0 + T[ \longrightarrow \H$ of class $C^2$, such that
\begin{equation}\label{E:local existence1}
 m \ddot u(t) = -\gamma \dot u(t) + s(u(t)) \ \text{ for all } t\in [t_0,t_0+T[,  \text{ with } u(t_0)=u_0, \dot u(t_0)=\dot u_0.
\end{equation}
\end{proposition}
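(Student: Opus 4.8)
The plan is to recast the second-order equation as a first-order system on the phase space $\H\times\H$ and then invoke Peano's theorem (Theorem~\ref{T:Peano}). Since $\H$ is finite-dimensional by hypothesis, say $\dim\H=n$, the phase space $\H\times\H$ is also finite-dimensional (of dimension $2n$), so Peano's result still applies there with $\R^n$ replaced by $\R^{2n}\cong\H\times\H$.

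First I would introduce the new unknown $v=\dot u$ and the state variable $U=(u,v)$, together with the vector field
\[
\Phi : \H\times\H \longrightarrow \H\times\H, \qquad \Phi(u,v) = \Bigl(\, v,\ \tfrac{1}{m}\bigl(-\gamma v + s(u)\bigr) \Bigr).
\]
A curve $U=(u,v)$ of class $C^1$ satisfies $\dot U = \Phi(U)$ on $[t_0,t_0+T[$ with $U(t_0)=(u_0,\dot u_0)$ if and only if $v=\dot u$, $u(t_0)=u_0$, $\dot u(t_0)=\dot u_0$, and $m\ddot u = -\gamma\dot u + s(u)$ on $[t_0,t_0+T[$. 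Moreover, in that case $\dot u = v$ is of class $C^1$, hence $u$ is automatically of class $C^2$. So it suffices to produce such a curve $U$.

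Next I would check that $\Phi$ is continuous on all of $\H\times\H$. The projection $(u,v)\mapsto v$ and the linear map $v\mapsto -\tfrac{\gamma}{m}v$ are continuous, so the only point to verify is the continuity of $u\mapsto s(u)$ on $\H$. This follows from Proposition~\ref{P:Holder continuity}: $s$ is $\tfrac12$-H\"older continuous on every bounded set, and since every point of $\H$ admits a bounded neighbourhood, $s$ is continuous at each point of $\H$. Therefore $\Phi$ is continuous on $\H\times\H$.

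Finally, applying Theorem~\ref{T:Peano} to $\Phi$, with initial datum $(u_0,\dot u_0)$ at time $t_0$, yields $T>0$ and a curve $U=(u,v):[t_0,t_0+T[\to\H\times\H$ of class $C^1$ with $\dot U = \Phi(U)$ and $U(t_0)=(u_0,\dot u_0)$. Reading off the two components of this identity gives exactly \eqref{E:local existence1}, and, as observed above, $u$ is then of class $C^2$, which completes the proof. The argument has no genuinely hard step: the one point requiring care — already flagged before the statement — is that the right-hand side of the dynamics is continuous, i.e.\ that $s$ is continuous, which is precisely what Proposition~\ref{P:Holder continuity} provides; the delicate issue of non-uniqueness of trajectories is not needed here and is postponed.
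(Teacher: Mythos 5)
Your proposal is correct and follows essentially the same route as the paper: reduce (IMOG) to the first-order system $\dot U=\Phi(U)$ on the finite-dimensional phase space $\H\times\H$, use Proposition~\ref{P:Holder continuity} to get continuity of $\Phi$, apply Peano's theorem, and recover the $C^2$ regularity of $u$ from $\dot u=v\in C^1$. No gaps.
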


\begin{proof}
We just need to apply a change of variables in $\IMOG$ to get a first-order ODE. Let $\phi : \H^2 \longrightarrow \H^2$ be defined by 
\begin{equation}\label{le1}
\phi(u,v):=\frac{1}{m}(mv, - \gamma v + s(u)).
\end{equation}
Clearly, from Proposition \ref{P:Holder continuity}, $\phi$ is continuous on $\H^2$. We can then apply Peano's Theorem at $t_0$ and $x_0:=(u_0,\dot u_0)$, to get some $x : [t_0,t_0+T[ \longrightarrow \H^2$ of class $C^1$ such that (\ref{E:Peano}) holds. If we note $x(t)=(u(t),v(t)) \in \H\times \H$, (\ref{E:Peano}) can be rewritten as :
\begin{equation}\label{le2}
\dot{u}(t) = v(t), m\dot v(t) = - \gamma v(t) + s(u(t)) \ \text{ for all } t\in [t_0,t_0+T[,  \text{ with } u(t_0)=u_0, v(t_0)=\dot u_0.
\end{equation}
Since $x$ is of class $C^1$, we deduce that it is also the case for $u$ and $v$. But from $\dot{u}(t) = v(t)$, we can see that $u$ is of class $C^2$ and satisfies (\ref{E:local existence1}).
\end{proof}

\begin{remark}
For this result we use Theorem \ref{T:Peano}, which asks the space to be finite dimensional. 
In fact, Peano's theorem can be stated in the Banach space setting, if one asks the vector field involved to be compact.  
We recall that an application $\phi : \H \longrightarrow \H$ is said to be compact whenever it is continuous and maps bounded sets to relatively compact sets.
Observe that if the gradients $\nabla f_i$ are all compact, then $s$ is also compact.
Hence, one might want to apply Peano's result in this context. 
Nevertheless, by reducing (IMOG) to a first-order ODE, we do not deal directly with $s$ but with $(u,v) \mapsto \frac{1}{m}(mv, - \gamma v + s(u))$. 
And it can be easily proved that if $s$ is compact,  then $v \mapsto v$ is also compact, which would mean that $\H$ has finite dimension.
\end{remark}

We can now state our main existence result. To get a global solution on $[0, + \infty[$, we do a stronger hypothesis on the gradients.

\begin{theorem}\label{T:global existence}
Suppose that $\H$ has finite dimension, and that the gradients $\nabla f_i$ are globally Lipschitz continuous. 
Then, for all $t_0 \in \R$, $(u_0,\dot u_0) \in \H \times \H$, there exists  $u : [t_0,+\infty[ \longrightarrow \H$ of class $C^2$, such that
\begin{equation}\label{E:local existence}
 m \ddot u(t) = -\gamma \dot u(t) + s(u(t)) \ \text{ for all } t\in [t_0,+\infty[,  \text{ with } u(t_0)=u_0, \dot u(t_0)=\dot u_0.
\end{equation}
\end{theorem}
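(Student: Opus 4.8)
The plan is to upgrade the local existence result of Proposition \ref{P:local existence} to a global one by showing that a maximal solution cannot blow up in finite time, which is the standard continuation argument for ODEs with a continuous (here, merely H\"older) vector field. First I would recall that, by Peano's theorem together with Zorn's lemma, any local solution can be extended to a maximal solution $u : [t_0, T_{\max}[ \longrightarrow \H$, and that if $T_{\max} < +\infty$ then necessarily $\Vert u(t) \Vert + \Vert \dot u(t) \Vert \to +\infty$ as $t \uparrow T_{\max}$ (otherwise one could restart Peano's theorem at a limit point and contradict maximality). So it suffices to derive an a priori bound on $\Vert u(t) \Vert$ and $\Vert \dot u(t) \Vert$ on any bounded time interval $[t_0, T[$.

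The a priori estimate is where the global Lipschitz hypothesis on the $\nabla f_i$ enters. The key observation is that $s(u) = -\co \nabla f_i(u)^0$ is the element of minimal norm of the convex hull of $\{\nabla f_i(u)\}$, hence $\Vert s(u) \Vert \leq \max_i \Vert \nabla f_i(u) \Vert \leq \max_i (\Vert \nabla f_i(0) \Vert + L \Vert u \Vert) \leq a + b \Vert u \Vert$ for suitable constants $a, b \geq 0$, where $L$ is a common Lipschitz constant for the gradients. Writing the system in phase space as $\dot u = v$, $\dot v = \tfrac{1}{m}(-\gamma v + s(u))$, I would consider the energy-type function $E(t) = \tfrac{1}{2}\Vert u(t) \Vert^2 + \tfrac{1}{2}\Vert v(t) \Vert^2$ (or equivalently work directly with $\tfrac{d}{dt}(\Vert u \Vert + \Vert v \Vert)$). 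Differentiating and using Cauchy--Schwarz together with the linear growth bound on $s$ gives $\dot E(t) \leq C_1 + C_2 E(t)$ for constants $C_1, C_2$ depending only on $m, \gamma, L$ and $\Vert \nabla f_i(0) \Vert$; Gronwall's lemma then yields $E(t) \leq (E(t_0) + C_1 t) e^{C_2 t}$, which is finite on every bounded interval. Hence $u$ and $v = \dot u$ stay bounded on $[t_0, T[$ for every finite $T$, so $T_{\max} = +\infty$, and the obtained $u$ is of class $C^2$ on $[t_0, +\infty[$ by the same bootstrap as in Proposition \ref{P:local existence} (from $\dot u = v \in C^1$).

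The main obstacle, such as it is, is not deep but requires care: one must be sure that the blow-up alternative for Peano solutions is being invoked correctly in the phase-space formulation rather than for the original second-order equation — i.e. that the relevant quantity whose boundedness precludes a finite $T_{\max}$ is $\Vert (u(t), \dot u(t)) \Vert$, not merely $\Vert u(t) \Vert$. The estimate on $\Vert s(u) \Vert$ via the minimal-norm-in-convex-hull characterization is the only place the structure of the multiobjective problem is used, and it is elementary once one notes that each $\nabla f_i(u)$ itself lies in $\co \nabla f_i(u)$, so the minimal norm element is no larger than any of them. Everything else is a routine Gronwall argument; I would present the energy estimate cleanly and cite a standard reference for the continuation/maximality principle.
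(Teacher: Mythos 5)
Your proposal is correct and follows essentially the same route as the paper: reduce to a first-order system in phase space, use the global Lipschitz hypothesis to get the linear growth bound $\Vert s(u)\Vert \leq a + b\Vert u\Vert$ (your minimal-norm-element observation is a slightly cleaner way to obtain the paper's estimate via convex combinations), and run a Gronwall argument on an energy/distance functional to rule out finite-time blow-up of $(u,\dot u)$ before restarting the local existence result at the endpoint. The only cosmetic difference is that the paper bounds $h(t)=\Vert x(t)-x(t_0)\Vert$ and extends the solution explicitly by integrating $\dot u$ up to $T$, rather than invoking a named blow-up alternative, but these are the same continuation argument.
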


\begin{proof}[Proof of Theorem \ref{T:global existence}]
Proposition \ref{P:local existence} provides us a local solution and, using Zorn's lemma, we can suppose that it is a maximal solution $u : [t_0,T[ \longrightarrow \H$, with $T \in [t_0,+\infty]$. 
The whole point is to prove that $T = +\infty$. 
For this, we argue by contradiction by supposing that $T < +  \infty$. 
We will show that the solution does not blow up in finite time, and extend it at $T$ to obtain a contradiction.

Using the fact that the gradients are globally Lipschitz continuous, we can derive the following global growth property for $s$ :
\begin{equation}\label{ge1}
\exists c>0 \text{ s.t. } \forall u\in \H, \ \Vert s(u) \Vert \leq c ( 1 + \Vert u \Vert ).
\end{equation}
Indeed, for all $u \in \H$, there exists $\theta(u) \in \S^q$ such that
\begin{equation}\label{ge2}
\Vert s(u) \Vert = \Vert {\sum\limits_{i=1}^{q} \theta_i(u) \nabla f_i(u)} \Vert \leq \sum\limits_{i=1}^{q} \theta_i(u) \Vert \nabla f_i(u) - \nabla f_i(0) \Vert + \Vert \nabla f_i(0) \Vert.
\end{equation}
Then (\ref{ge1}) holds by taking for instance $c_1= \max\limits_{i\in\{1,...,q\}}  \{ \Vert \nabla f_i(0) \Vert, Lip(\nabla f_i;\H) \}$.

From this growth condition, we will obtain some energy estimates on the trajectory. Let us show that $\dot u$ and $\ddot u$ lie in $L^\infty(t_0,T;\H)$. 
For this, we consider as before
\begin{equation}\label{ge3}
\phi : \H^2 \longrightarrow \H^2, \ (u,v)\mapsto \phi(u,v)=\frac{1}{m}(mv, - \gamma v + s(u)).
\end{equation}
By defining $x(t):=(u(t),\dot u(t))$ for all $t \in [t_0,T[$, we see that $\dot x(t) = \phi(x(t))$ on $[t_0,T[$.
\if{Consider now some $\bar x \in \H^2$ such that for all $t \in ]t_0,T[$, $x(t) \neq \bar x$. The existence of such a point is guaranteed for instance by Sard's theorem, since $x(\cdot)$ is of class $C^1$ and $\H^2$ is at least of dimension $2$. }\fi
Define  $h(t):=\Vert x(t) - x(t_0) \Vert$ on $[t_0,T[$, which is continuous on $ [t_0,T[$. 
Equip $\H^2$ with the scalar product inherited from $\H$, and note that $h^2$ is derivable on $[t_0,T[$, so we can write for all $t \in [t_0,T[$ :
\begin{equation}\label{ge4}
\frac{d}{dt} \frac{1}{2} h^2(t) = \langle \dot x(t) , x(t) - x(t_0) \rangle = \langle \phi(x(t)), x(t) - x(t_0) \rangle \leq \Vert \phi(x(t)) \Vert h(t).
\end{equation}
From the growth condition (\ref{ge1}) we deduce an upper bound for $\Vert \phi(x(t))\Vert$. Indeed,  for all $x=(u,v) \in \H^2$,
\begin{eqnarray*}
\Vert \phi(x) \Vert  \leq & \frac{1}{m}( \Vert m v \Vert + \Vert s(u) - \gamma v \Vert ) & \text{ using the equivalence between $\ell^1$ and $\ell^2$ norms} \\
\leq &(1 + \frac{\gamma}{m}) \Vert v \Vert +  \frac{c}{m}(1 + \Vert u \Vert) & \text{ using the triangle inequality with (\ref{ge1})} \\
\leq & c_2 (1+ \Vert x \Vert ) & \text{ with } c_2:= \sqrt{2} \max \{ \frac{c}{m}; 1 + \frac{\gamma}{m} \}.
\end{eqnarray*}
Using the triangle inequality with $c_3:=c_2(1+\Vert x(t_0) \Vert)$, it follows for all $t \in [t_0,T[$ that
\begin{equation}\label{ge5}
\Vert \phi(x(t))\Vert \leq c_3(1 + h(t)).
\end{equation}
Combining (\ref{ge4}) and (\ref{ge5}), we obtain
\begin{equation}\label{ge6}
\frac{d}{dt} \frac{1}{2} h^2(t) \leq c_3 h(t) (1 + h(t)) \text{ for all } t \in [t_0,T[.
\end{equation}

We will now conclude by using a Gronwall-type argument. Consider an arbitrary $\eps \in ]0,T-t_0[$. 
After integration of (\ref{ge6}) on $[t_0,T-\eps]$, and using $h(t_0)=0$, we obtain
{
\begin{equation}\label{ge6.1}
\frac{1}{2}h^2(t) \leq  \int_{t_0}^t c_3(1+h(s)) h(s) \ ds \text{ for all } t \in [t_0,T-\eps] .
\end{equation}
Since $h$ is continuous on $[0,T-\eps]$, the function $g:t \in [0,T- \eps] \mapsto c_3 (1+h(t))$ is in $L^1([0,T-\eps],\R)$. 
Hence we can apply Lemma \ref{L:Gronwall 2} (we left it in the Appendix) to obtain
\begin{equation}\label{ge6.2}
h(t) \leq \int_{t_0}^t c_3 (1+h(s)) \ ds \text{ for all } t \in [t_0,T-\eps].
\end{equation}
We easily obtain from (\ref{ge6.2}) and $T < + \infty$ that
\begin{equation}\label{ge6.3}
h(t) \leq  c_3 T + c_3\int_{t_0}^t h(s) \ ds \text{ for all } t \in [t_0,T-\eps],
\end{equation}
so we can use the Gronwall-Bellman's Lemma (see Lemma \ref{L:Gronwall-Bellman} in the Appendix), and obtain :
\begin{equation}\label{ge6.4}
h(t) \leq  c_3 T   e^{c_3 t} \leq c_3 T  e^{c_3 T} \text{ for all } t \in [t_0,T-\eps].
\end{equation}
}
Since the upper bound in (\ref{ge6.4}) is independent of $\eps$ and $t$, we deduce that $h \in L^\infty([0,T],\R)$.

\if{Since $\frac{d}{dt} \frac{1}{2} h^2(t) = h'(t) h(t)$, it follows after dividing (\ref{ge6}) by $h(t)>0$ that 
\begin{equation}\label{ge7}
h'(t) \leq c_3(1+h(t)).
\end{equation}
Applying then Gronwall's Lemma to $t \mapsto (1+ h(t))$, we conclude from $T<+\infty$ that $h \in L^\infty(0,T;\R)$. }\fi

From the definition of $h$, we obtain that $u$ and $\dot u$ lie in $L^\infty (0,T;\H)$. Moreover, using the growth condition (\ref{ge1}), we see that $s\circ u \in L^\infty(0,T;\H)$, so $\ddot{u}(t) = \frac{1}{m}(s(u(t)) - \gamma \dot u(t))$ lies also in $L^\infty(0,T;\H)$.
Now, since $T$ is supposed finite, we can say that $L^\infty(0,T;\H) \subset L^1(0,T;\H)$, so $u$ can be extended continuously at $T$ by $u(T):=u(0) + \displaystyle \int_0^T \dot u(t) dt$, and we can do the same for $\dot u$. Hence, we can apply Proposition \ref{P:local existence} at $t_0=T$ with $(u_0,\dot u_0)=(u(T),\dot u(T))$ to extend the solution $u(\cdot)$, which contradicts its maximality.
\end{proof}
\if{\color{red}
\begin{remark}
In order to integrate inequation (\ref{ge7}), another method would consist in using the Gronwall-Bellman  lemma, 
see Brezis \cite[Lemme A.5.]{Br}.
\end{remark}
}\fi

As already observed in \cite{AttGarGou15}, the steepest descent vector field governing the dynamic is not Lipschitz, neither monotone (even in the convex setting).
So we cannot use methods from monotone operator theory, and the question of uniqueness of the trajectories remains open in the general context.
Nevertheless, under some assumptions, we still can ensure the uniqueness.

\begin{proposition}\label{P:uniqueness}
Let $u$ be a trajectory solution of the Cauchy problem (\ref{E:local existence}). Suppose that for all $t \in [t_0,+\infty[$, $(\nabla f_i (u(t)))_{i=1,...,q}$ are linearly independent vectors.
Then $u$ is the unique solution to (\ref{E:local existence}).
When $q=2$, the same conclusion holds, just assuming that  $\nabla f_1(u(t)) \neq \nabla f_2(u(t)) $.
\end{proposition}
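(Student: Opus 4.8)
The plan is to reduce Proposition~\ref{P:uniqueness} to the uniqueness part of the Cauchy--Lipschitz theorem applied to the first-order reformulation $\dot x(t)=\phi(x(t))$, $x=(u,\dot u)$, with $\phi$ as in \eqref{le1}. Since $\phi$ is locally Lipschitz as soon as $s$ is (its dependence on the velocity variable being affine), everything comes down to the following \emph{main step}: under the hypothesis made, $s$ is \emph{locally Lipschitz in a neighborhood of $u(t)$ in $\H$} for every $t\in[t_0,+\infty[$. This is an improvement over the $\tfrac12$-H\"older continuity valid in general (Proposition~\ref{P:Holder continuity}), and exploiting it is where the real work lies.

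To carry out the main step, fix $t_1\in[t_0,+\infty[$ and set $\bar u:=u(t_1)$. By hypothesis $\nabla f_1(\bar u),\dots,\nabla f_q(\bar u)$ are linearly independent; as $u\mapsto(\nabla f_i(u))_i$ is continuous and linear independence is an open condition, there is a bounded open neighborhood $\V$ of $\bar u$ on which the Gram matrix $G(u):=\big(\langle\nabla f_i(u),\nabla f_j(u)\rangle\big)_{ij}$ stays uniformly positive definite. On $\V$ one has $s(u)=-\sum_{i=1}^{q}\theta_i(u)\nabla f_i(u)$, where $\theta(u)\in\S^q$ is the \emph{unique} solution of the strongly convex quadratic program $\min_{\theta\in\S^q}\langle G(u)\theta,\theta\rangle$. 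I would then combine two ingredients: first, $u\mapsto G(u)$ is Lipschitz on the bounded set $\V$ (it is built from gradients that are Lipschitz on bounded sets); second, the solution map of a strongly convex quadratic program over a fixed polytope depends in a locally Lipschitz manner on the (uniformly positive definite) matrix of its quadratic form --- a classical fact of parametric quadratic programming, and precisely the point where strong convexity, i.e.\ linear independence of the gradients, is indispensable. Composing, $u\mapsto\theta(u)$, and hence $s$, is Lipschitz near $\bar u$. For $q=2$ the main step can be done by hand: $s(u)=-\big((1-\hat\theta(u))\nabla f_1(u)+\hat\theta(u)\nabla f_2(u)\big)$, where $\hat\theta(u)$ is the clamping to $[0,1]$ of $\langle\nabla f_1(u),\nabla f_1(u)-\nabla f_2(u)\rangle/\|\nabla f_1(u)-\nabla f_2(u)\|^2$; the numerator and denominator are Lipschitz, the denominator stays bounded away from $0$ near any point where $\nabla f_1\neq\nabla f_2$, and clamping is $1$-Lipschitz, so $s$ is locally Lipschitz at every such point --- a condition strictly weaker than linear independence.

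The conclusion is then a standard continuation argument. Let $\tilde u$ be any solution of \eqref{E:local existence} with the same initial data. Since $\phi$ is locally Lipschitz near $(u_0,\dot u_0)$, Cauchy--Lipschitz gives $u\equiv\tilde u$ on some $[t_0,t_0+\delta]$, so $\tau:=\sup\{T\ge t_0:\ u\equiv\tilde u\text{ on }[t_0,T]\}$ satisfies $\tau>t_0$. Suppose $\tau<+\infty$. By continuity of $u,\tilde u$ and of their derivatives (the trajectories are $C^2$), $u(\tau)=\tilde u(\tau)$ and $\dot u(\tau)=\dot{\tilde u}(\tau)$; since $\nabla f_i(u(\tau))$ are linearly independent, the main step yields a neighborhood of $u(\tau)$ on which $s$ is Lipschitz, and for $t$ close to $\tau$ both $u(t)$ and $\tilde u(t)$ remain in it, so Cauchy--Lipschitz uniqueness (run from time $\tau$) forces $u\equiv\tilde u$ on $[\tau,\tau+\delta']$ for some $\delta'>0$, contradicting the definition of $\tau$. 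Hence $\tau=+\infty$ and $u$ is the unique solution.

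I expect the only genuine obstacle to be the main step: upgrading the modulus of continuity of $s$ from $\tfrac12$-H\"older to locally Lipschitz under the linear-independence hypothesis, for which one must open up the simplex/strong-convexity structure of the minimal-norm problem defining $s$; the remaining continuation argument is the routine open--closed/maximality reasoning familiar from ODEs with locally Lipschitz right-hand side.
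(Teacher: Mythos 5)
Your proposal is correct and follows essentially the same route as the paper: the paper's proof also reduces everything to the fact that, under the linear-independence (resp.\ $\nabla f_1\neq\nabla f_2$) hypothesis, the steepest descent field $s$ is locally Lipschitz in a neighbourhood of each point $u(t)$ of the trajectory, and then invokes Cauchy--Lipschitz in place of Peano. The only difference is that the paper simply cites \cite[Proposition 3.4]{AttGarGou15} for this local Lipschitz property, whereas you supply a self-contained proof of it via the Gram matrix and the Lipschitz stability of strongly convex quadratic programs over the simplex, and you also spell out the continuation argument that the paper leaves implicit; both of these additions are sound.
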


\begin{proof}
It is proved in \cite[Proposition 3.4]{AttGarGou15}  that under these hypotheses, {for all $t \in [t_0,+\infty[$, the steepest descent vector field is locally Lipschitz  in the neighbourhood of $u(t)$}.
Hence, it suffices to apply the Cauchy-Lipschitz theorem instead of Peano's to derive the uniqueness of $u$.
\end{proof}

\subsection{Examples}

\begin{example}
Take the quadratic functions $f_1(x,y)= \frac{1}{2}(x+1)^2    + \frac{1}{2}y^2 $  and  $f_2(x,y)= \frac{1}{2}(x-1)^2    + \frac{1}{2}y^2 $. The corresponding Pareto set is $[-1, +1] \times \left\{0\right\}$ and the steepest descent vector field is given by  :

\begin{equation}
s(x,y)=
\begin{cases}
 -(x-1,y)     \  \ \mbox{if } \  x>1,\\
 -(0,y) \  \quad \quad \mbox{if } \  -1 \leq x \leq 1,\\
 -(x+1,y)     \ \  \mbox{if } \   x<-1 .
\end{cases}
\end{equation}

\noindent Figure 1 shows some trajectories of the (IMOG) dynamic,  with the steepest descent vector field plotted in background. We used the following parameters : $m=1$, and $(u_0,\dot u_0)$ are taken randomly.
Here the trajectories are computed exactly, since in this simple example (IMOG) can be solved explicitely. 
For each trajectory, the initial point is indicated by the symbol $\times$, and the limit point by $\oplus$.
We can observe the following :  the trajectories all converge to a Pareto point, the dynamic is clearly  not a descent method, and can be highly oscillating whenever the friction parameter is too close to zero.

\end{example}
{
\begin{center}
\begin{figure}[h!]
   \begin{minipage}[l]{.46\linewidth}
      \includegraphics[scale=0.4]{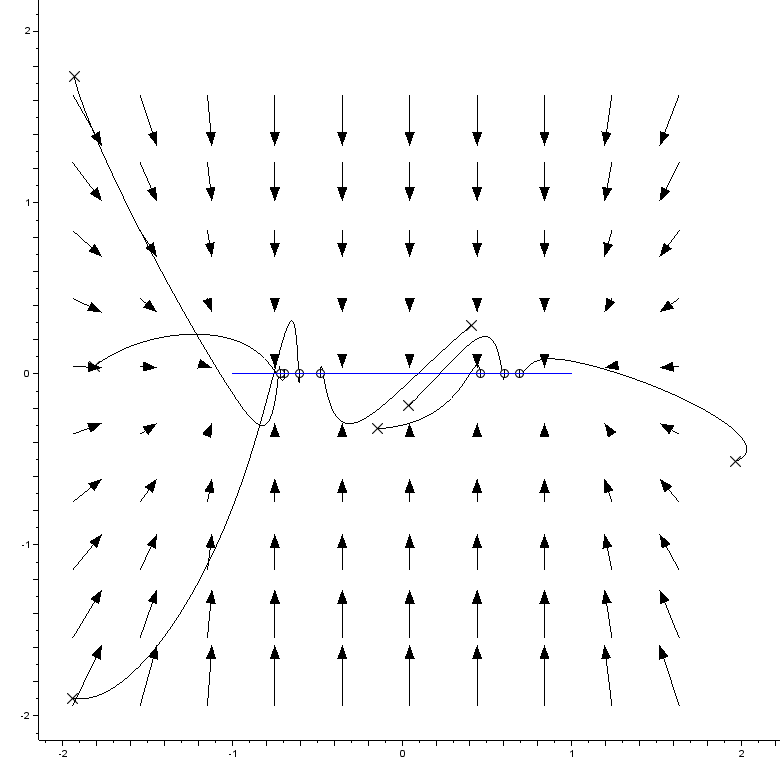}
   \end{minipage} \hfill
   \begin{minipage}[l]{.46\linewidth}
      \includegraphics[scale=0.4]{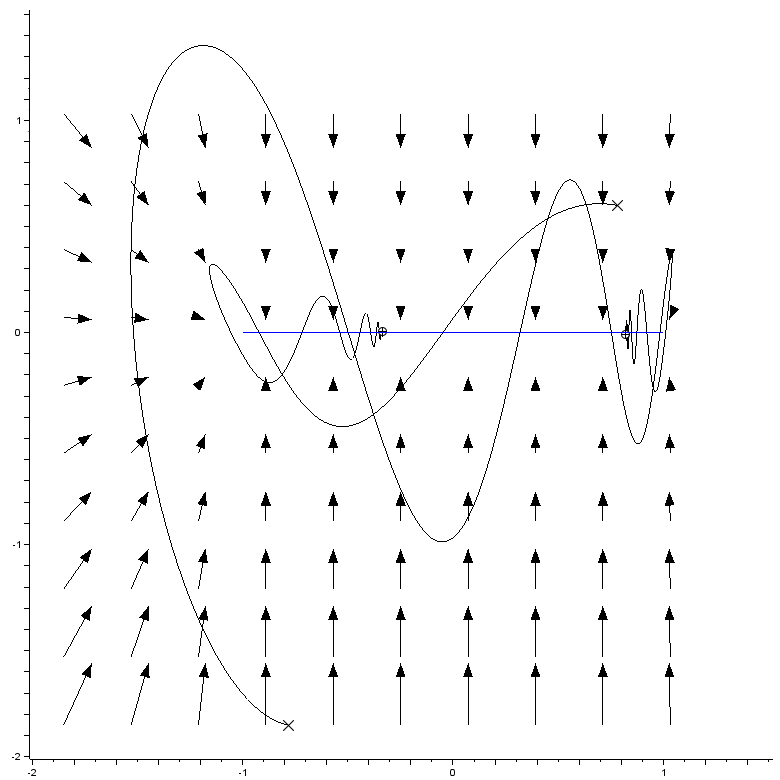}
   \end{minipage}
Figure 1. Friction parameter $\gamma=1$ (left) and $\gamma=0.1$ (right).
\end{figure}
\end{center}
}

\begin{example}
Let $f_1 (x,y) =\frac{1}{2}(x^2 + y^2)$ and   $f_2 (x,y)= x$. The corresponding Pareto set is $]-\infty, 0] \times \{0\}$, plotted in blue in Figure 2. Once computed, we see that the steepest descent vector field is defined according to three areas of the plane (these areas are delimited by red lines in blue in Figure 2) :  
\begin{equation}
s(x,y)=
\begin{cases}
 -(1,0)   \quad   \quad   \quad   \quad   \quad \quad  \ \  \quad \mbox{if } \  x\geq 1,\\
 -(x,y) \    \quad   \quad   \quad   \quad    \quad   \  \quad  \quad \mbox{if } \  (x-\frac{1}{2})^2 + y^2 \leq \frac{1}{4},\\
 \frac{-1}{(x-1)^2+y^2} (y^2,y(1-x))      \ \  \  \mbox{else} .
\end{cases}
\end{equation}
In this case we plotted the trajectories using an explicit discretization in time of (IMOG) :
\begin{eqnarray*}
&m\dfrac{u_{n+1}-2u_n + u_{n-1}}{t} + \gamma \dfrac{u_{n+1}-u_n}{t} + s(u_n) =0 \\
\Leftrightarrow & u_{n+1} = u_n + \frac{m}{m+t\gamma}(u_n - u_{n-1}) - \frac{t^2}{m+t\gamma}s(u_n).
\end{eqnarray*}
We took $m=1,\gamma =1$ and $t=0.05$, and again the initial point for each trajectory is indicated by the symbol $\times$, and the limit point by $\oplus$.

{
\begin{center}
\begin{figure}[h!]
		\includegraphics[scale=0.39]{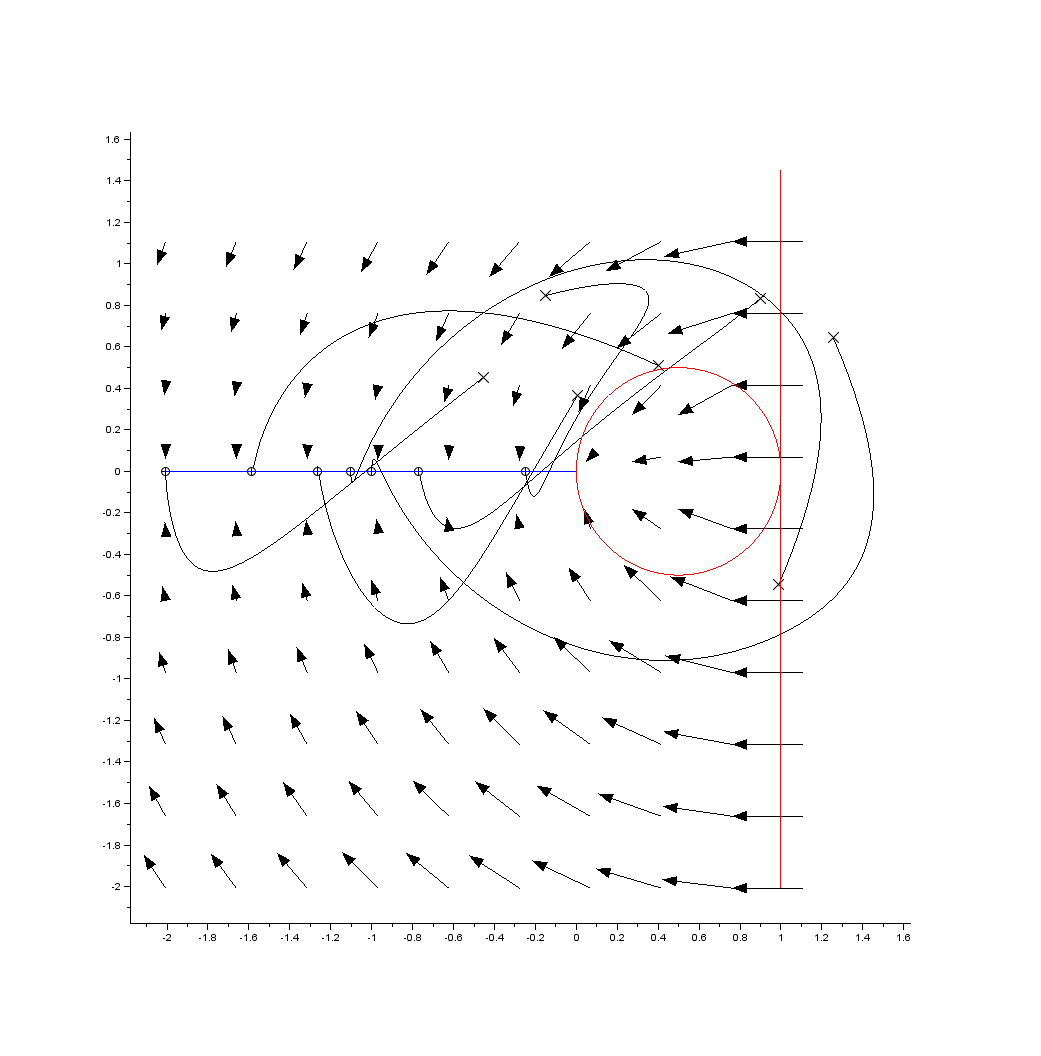}
		
Figure 2
\end{figure}
\end{center}
}

\end{example}

\if{\color{red}Pour terminer cette section, il serait intéressant de montrer l'existence dans le cas convexe sans global Lipschitz continuité des gradients (par régularisation Moreau-Yosida). Pour ces deux résultats, le point clé est qu'il faut prouver des estimations sur $u$. Or si on regarde attentivement les preuves des Proposition \ref{P:dissipative property} et \ref{P:energy estimations}, on voit qu'il est difficile de se passer de l'hypothèse que $u$ est bornée. 

Il faut aussi parler de l'unicité.
}\fi

\section{Properties of the solutions of (IMOG)}

For a given function $\phi:\H \longrightarrow \H$ and a nonempty subset $A \subset \H$, note $Lip(\phi;A)$ the best Lipschitz constant of $\phi$ over $A$, that is $Lip(\phi;A):= \sup\limits_{x\neq y \in A} \frac{\Vert \phi(x) - \phi(y) \Vert}{\Vert x - y \Vert}$. We say that $\phi$ is Lipschitz over $A$ whenever $Lip(\phi;A) < + \infty$.

\subsection{A dissipative system}

We start  our study of the dynamic by showing that it is a dissipative system. But before, we need the following chain rule:

\begin{lemma}(Chain rule)\label{L:chain rule second order}
Let $\phi : \H \longrightarrow \R$ and $u : I \longrightarrow \H$, where $I$ is a non-empty open subset of $\R$. 
Suppose that $\phi$ and $u$ are of class $C^{1,1}$ on $I$, and that $Lip(\nabla \phi;u(I)) <+\infty$.
Then for a.e. $t \in I$,
\begin{equation}\label{E:chain rule second order}
\frac{d^2}{dt^2} (\phi\circ u)(t) \leq Lip(\nabla \phi;u(I)) \Vert \dot u(t) \Vert^2 + \langle \nabla \phi(u(t)),\ddot u(t) \rangle.
\end{equation}
\end{lemma}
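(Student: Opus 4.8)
The plan is to express $\phi\circ u$ via a first-order chain rule, then differentiate once more while carefully controlling the error coming from the fact that $\nabla\phi$ is only Lipschitz (not $C^1$). Since $\phi$ is $C^{1,1}$ and $u$ is $C^{1,1}$ on $I$, the composition $\phi\circ u$ is $C^1$ on $I$ with
\[
\frac{d}{dt}(\phi\circ u)(t) = \langle \nabla\phi(u(t)),\dot u(t)\rangle =: g(t).
\]
The function $g$ is a product of $t\mapsto\nabla\phi(u(t))$, which is Lipschitz on any interval where $u$ stays bounded (composition of the Lipschitz map $\nabla\phi$ with the $C^{1}$, locally Lipschitz map $u$), and $\dot u$, which is $C^1$. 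Hence $g$ is locally Lipschitz on $I$, therefore differentiable a.e., and $\phi\circ u$ is $C^{1,1}$ with second derivative existing a.e. The first step is thus simply to record that $\frac{d^2}{dt^2}(\phi\circ u) = g'$ a.e.

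Next I would estimate $g'(t)$ at a point $t$ of differentiability. Writing the difference quotient,
\[
\frac{g(t+h)-g(t)}{h}
= \left\langle \frac{\nabla\phi(u(t+h))-\nabla\phi(u(t))}{h},\,\dot u(t+h)\right\rangle
+ \left\langle \nabla\phi(u(t)),\,\frac{\dot u(t+h)-\dot u(t)}{h}\right\rangle .
\]
The second term converges to $\langle\nabla\phi(u(t)),\ddot u(t)\rangle$ as $h\to 0$ since $u$ is $C^2$-differentiable a.e. (indeed $\dot u$ is $C^1$... more precisely $C^{0,1}$, hence $\ddot u$ exists a.e.; at points where it exists this term behaves as claimed). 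For the first term, I bound
\[
\left\|\frac{\nabla\phi(u(t+h))-\nabla\phi(u(t))}{h}\right\|
\le Lip(\nabla\phi;u(I))\,\frac{\|u(t+h)-u(t)\|}{|h|},
\]
which tends to $Lip(\nabla\phi;u(I))\,\|\dot u(t)\|$, while $\dot u(t+h)\to\dot u(t)$; by Cauchy–Schwarz the lim sup of the first term is at most $Lip(\nabla\phi;u(I))\,\|\dot u(t)\|^2$. Taking $h\to 0$ (along a sequence realizing the derivative, and using that at a.e. $t$ both $\ddot u(t)$ and $g'(t)$ exist) gives exactly inequality \eqref{E:chain rule second order}.

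The main obstacle, and the point requiring care, is the regularity bookkeeping: $\nabla\phi$ is not differentiable, so one cannot naively write $\frac{d}{dt}\nabla\phi(u(t)) = D^2\phi(u(t))\dot u(t)$. The clean way around this is to work with difference quotients as above and pass to the limit only at points where all the relevant derivatives ($\ddot u(t)$, and $\frac{d}{dt}(\phi\circ u)'(t)$) exist — a full-measure set, by Rademacher/Lebesgue differentiation applied to the locally Lipschitz functions $\dot u$ and $g$. One should also note that $u(I)$ may be unbounded, in which case $Lip(\nabla\phi;u(I))=+\infty$ and the inequality is vacuous; otherwise $u(I)$ is contained in a bounded set and all the Lipschitz constants invoked are finite, so every estimate above is legitimate. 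No compactness or finite-dimensionality is needed here.
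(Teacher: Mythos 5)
Your proof is correct and follows essentially the same route as the paper's: apply the first-order chain rule, observe that $\dot u$ and $\nabla\phi\circ u$ are locally Lipschitz hence a.e.\ differentiable, and bound the troublesome term by Cauchy--Schwarz together with $Lip(\nabla\phi;u(I))$. The only (cosmetic) difference is that you split the difference quotient of the product directly instead of first invoking the product rule for the a.e.\ derivative of $\langle\nabla\phi\circ u,\dot u\rangle$, which makes the bookkeeping slightly more explicit but changes nothing of substance.
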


\begin{proof}
By hypothesis, $\dot u$ and $\nabla \phi \circ u$ are locally Lipschitz continuous, hence differentiable almost everywhere.
So, from $\frac{d}{dt} (\phi \circ u) (t) = \langle \nabla \phi \circ u (t) , \dot u(t) \rangle$, we have for a.e. $t \in I$ 
\begin{equation}
\frac{d^2}{dt^2} (\phi \circ u) (t) = \langle \frac{d}{dt} ( \nabla \phi \circ u) (t) , \dot u(t) \rangle + \langle \nabla \phi \circ u (t) , \ddot u(t) \rangle.
\end{equation}
Moreover, we have in the second member (using the Cauchy-Schwarz inequality and the Lipschitz property of $\nabla\phi$) :
\begin{eqnarray*}
\langle \frac{d}{dt} ( \nabla \phi \circ u) (t) , \dot u(t) \rangle  = &\lim\limits_{h \to 0} \frac{1}{h} \langle  \nabla \phi \circ u (t+h) -  \nabla \phi \circ u (t) , \dot u(t) \rangle \\
  \leq & \lim\limits_{h \to 0} \frac{1}{\vert h \vert} \Vert  \nabla \phi \circ u (t+h) -  \nabla \phi \circ u (t) \Vert \Vert \dot u(t) \Vert  \\
	\leq &   \lim\limits_{h \to 0} L \frac{1}{\vert h \vert} \Vert u(t+h) - u(t) \Vert \Vert \dot u(t) \Vert = L \Vert \dot u(t) \Vert^2 
\end{eqnarray*}
where $L:=Lip(\nabla \phi;u(I))$.

\end{proof}

Let us prove now the dissipativity of our dynamic :

\begin{proposition}(Dissipative property)\label{P:dissipative property} Let $u : [t_0,T[ \longrightarrow \H$ be a solution of $\IMOG$. For all $i\in\{1,...,q\}$, define for all $t \in [t_0,T[$ :
\begin{equation}\label{D:energy of the system}
\E_i(t):=(f_i \circ u)(t) + \frac{m}{\gamma} (f_i \circ u)'(t) +m \Vert \dot u(t) \Vert^2.
\end{equation}
Then, for a.e. $t\in[t_0,T[$, if $L_i:=Lip(\nabla f_i ; u([t_0,T[))<+\infty$, we have
\begin{equation}\label{E:dissipative property}
	\E_i'(t) \leq - \frac{m^2}{\gamma} \Vert \ddot u(t) \Vert^2 - \frac{1}{\gamma}\left( \gamma^2 - mL_i  \right) \Vert \dot u(t) \Vert^2
\end{equation}
\end{proposition}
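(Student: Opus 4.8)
The plan is to differentiate $\E_i$ directly and then feed in two ingredients: the chain rule of Lemma \ref{L:chain rule second order} to control the second-order term $\frac{m}{\gamma}(f_i\circ u)'$, and the combination of the equation $\IMOG$ with the common descent property \eqref{E:common descent property}. Before differentiating I would first record the regularity at stake. On $]t_0,T[$ the trajectory $u$ is $C^2$, hence $C^{1,1}$ on compact subintervals, and it maps bounded subintervals into bounded subsets of $\H$; since $L_i=Lip(\nabla f_i;u([t_0,T[))<+\infty$, the map $t\mapsto\nabla f_i(u(t))$ is Lipschitz there, so $(f_i\circ u)'(t)=\langle\nabla f_i(u(t)),\dot u(t)\rangle$ is a product of a Lipschitz function with a $C^1$ function, hence locally Lipschitz and differentiable a.e.; the same then holds for $\E_i$, which is why the inequality in the statement is claimed only for a.e.\ $t$. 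Moreover $f_i$ is $C^{1,1}$ on $u([t_0,T[)$ with constant $L_i$, so Lemma \ref{L:chain rule second order} applies with $\phi=f_i$.

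Then I would differentiate term by term. Using $\frac{d}{dt}(f_i\circ u)(t)=\langle\nabla f_i(u(t)),\dot u(t)\rangle$, then $\frac{d}{dt}\big(m\Vert\dot u(t)\Vert^2\big)=2m\langle\dot u(t),\ddot u(t)\rangle$, and Lemma \ref{L:chain rule second order} for the middle term, one reaches for a.e.\ $t$
\begin{equation*}
\E_i'(t)\leq\big\langle\nabla f_i(u(t)),\,\dot u(t)+\tfrac{m}{\gamma}\ddot u(t)\big\rangle+\tfrac{m}{\gamma}L_i\Vert\dot u(t)\Vert^2+2m\langle\dot u(t),\ddot u(t)\rangle.
\end{equation*}
The key identity is that $\IMOG$, i.e.\ $m\ddot u(t)+\gamma\dot u(t)=s(u(t))$, gives $\dot u(t)+\frac{m}{\gamma}\ddot u(t)=\frac{1}{\gamma}s(u(t))$, so the common descent property \eqref{E:common descent property} yields $\langle\nabla f_i(u(t)),\dot u(t)+\frac{m}{\gamma}\ddot u(t)\rangle\leq-\frac{1}{\gamma}\Vert s(u(t))\Vert^2$. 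Finally I would substitute $s(u(t))=m\ddot u(t)+\gamma\dot u(t)$ back into $-\frac{1}{\gamma}\Vert s(u(t))\Vert^2$ and expand the square: the cross term $-2m\langle\ddot u(t),\dot u(t)\rangle$ cancels the $+2m\langle\dot u(t),\ddot u(t)\rangle$ above, and regrouping the coefficients of $\Vert\dot u(t)\Vert^2$ as $\gamma-\frac{mL_i}{\gamma}=\frac{1}{\gamma}(\gamma^2-mL_i)$ produces exactly \eqref{E:dissipative property}.

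There is essentially no obstacle here: it is a direct computation. The only points calling for care are the a.e.\ differentiability of $\E_i$ (because $\nabla f_i$ is merely Lipschitz, not $C^1$) and checking that the hypotheses of Lemma \ref{L:chain rule second order}, in particular the finiteness of $Lip(\nabla f_i;u(I))$ on the interval $I$ in use, are indeed met; everything else is the algebra around the identity $\dot u+\frac{m}{\gamma}\ddot u=\frac{1}{\gamma}s(u)$ together with the inequality \eqref{E:common descent property}.
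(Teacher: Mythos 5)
Your proof is correct and takes essentially the same route as the paper's: the inequality $\langle\nabla f_i(u),s(u)\rangle\le-\Vert s(u)\Vert^2$ you invoke from \eqref{E:common descent property} is exactly the variational inequality for the projection of $0$ onto $\co\nabla f_i(u(t))$ that the paper's proof starts from, and both arguments combine it with Lemma \ref{L:chain rule second order} and the identity $s(u(t))=m\ddot u(t)+\gamma\dot u(t)$, the only difference being the order in which the algebra is arranged. Your preliminary discussion of the a.e.\ differentiability of $\E_i$ is a welcome precision that the paper leaves implicit.
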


\begin{proof}
The dissipative property is a direct consequence of the variational characterisation of the projection of $0$ over $\co \{ \nabla f_i(u(t))\}$ in (IMOG). 
Indeed, for a.e. $t \in [t_0,T[$, we have $-m \ddot u(t) - \gamma \dot u(t) = \proj_{\co \{\nabla f_i(u(t))\}}(0 )$.
It follows that
\begin{equation}\label{E:dp1}
 \langle  m \ddot u(t) + \gamma \dot u(t), \nabla f_i(u(t)) +  m \ddot u(t) + \gamma \dot u(t) \rangle \leq 0,
\end{equation}
which is equivalent, after distributing the terms and dividing by $\gamma$,  to
\begin{equation}\label{E:dp2}
\frac{m}{\gamma} \langle \nabla f_i(u(t)), \ddot u(t) \rangle 
+  \frac{d}{dt} [(f_i\circ u) + m   \Vert \dot u \Vert^2 ](t) 
\leq - \frac{m^2}{\gamma} \Vert \ddot u(t) \Vert^2 - \gamma \Vert \dot u(t) \Vert^2.
\end{equation}
Use now Lemma \ref{L:chain rule second order} with $ Lip(\nabla f_i;u([t_0,T[)) < + \infty$ to obtain
\begin{equation*}
\frac{m}{\gamma} \left( \frac{d^2}{dt^2} (f_i \circ u)(t) - L_i \Vert \dot u(t) \Vert^2 \right) + \frac{d}{dt} [(f_i\circ u) + m   \Vert \dot u \Vert^2 ](t) 
\leq - \frac{m^2}{\gamma} \Vert \ddot u(t) \Vert^2 - \gamma \Vert \dot u(t) \Vert^2,
\end{equation*}
which ends the proof.
\end{proof}

Proposition \ref{P:dissipative property} suggests that we need an hypothesis on the parameters to ensure the dissipative property :
\begin{eqnarray*}
\text{\rm (HP)}_i & \gamma^2 > mL_i \text{ where } L_i:= \ Lip(\nabla f_i;u([t_0,T[)). 
\end{eqnarray*}

\if{Let us make some observations on the hypotheses made here.
Hypothesis ii) is obviously satisfied if the objective functions are bounded from below. 
Because of i), it is also satisfied whenever the objective function are bounded on bounded sets, or weakly lower semi-continuous. This last assumption holds if $H$ finite dimensional or if the objective functions are convex.
Hypothesis iii) is more striking, since it asks the friction parameter $\gamma$ to be big enough (or, from an other point of view, it asks $\alpha$ to be closed enough to $1$). }\fi

\noindent If $\text{\rm (HP)}_i$ holds for all $i\in\{1,...,q\}$  then we just write $\text{\rm (HP)}$.
This hypothesis asks the friction parameter $\gamma$ to be  large enough, in order to limit the inertial effects, which induce oscillations (see Example REF). 
\if{In the particular mono-criteria case, this hypothesis can be assumed to be automatically satisfied. 
Indeed, since $\co \nabla f_i(u(t))$ is reduced to one point, the choice of the parameter $\alpha$ has absolutely no influence on the dynamic and can be supposed to be closed enough to $1$.}\fi
The hypothesis asks also implicitly the gradients $\nabla f_i$ to be Lipschitz over the trajectory (since $\gamma \in \R$).
Note that this last property holds whenever $u(\cdot)$ is bounded, since the gradients are Lipschitz continuous on bounded sets (see Corollary \ref{P:boundedness sufficient condition}).

As a direct consequence of the dissipative nature of the system, we obtain that the values $(f_i ( u(t))$ are bounded from above by $\max \{f_i(u_0);\E_i(t_0)\}$ :

\begin{corollary}{(Upper bound for the values)}\label{P:upper bound}
Let $u : [t_0,T[ \longrightarrow \H$ be a solution of $\IMOG$, such that  $\text{\rm (HP)}_i$ holds. Then, for all $i\in\{1,...,q\}$ and $t \geq t_0$, we have the following upper bounds :
\begin{equation}\label{E:upper bound}
f_i(u(t)) \leq \E_i(t_0) + (f_i(u_0) - \E_i(t_0)) e^{-\frac{\gamma}{m} (t-t_0)}.
\end{equation}
\end{corollary}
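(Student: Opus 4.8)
The idea is to turn the differential inequality \eqref{E:dissipative property} from Proposition \ref{P:dissipative property} into an ODE-style estimate for $\E_i$, and then recover a bound on $f_i(u(t))$ from the definition of $\E_i$. First I would observe that under $\text{\rm (HP)}_i$, i.e.\ $\gamma^2 > m L_i$, both terms on the right-hand side of \eqref{E:dissipative property} are nonpositive, so $\E_i'(t) \leq 0$ for a.e.\ $t \in [t_0,T[$; since $\E_i$ is absolutely continuous (because $f_i \circ u \in C^{1,1}$ and $\dot u$ is locally Lipschitz on $]t_0,T[$), this gives $\E_i(t) \leq \E_i(t_0)$ for all $t \geq t_0$. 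Actually one must be a little careful about regularity at the left endpoint $t_0$; but $u$ is $C^2$ on $]t_0,T[$ and continuous up to $t_0$ with $\dot u(t_0) = \dot u_0$, so $\E_i$ extends continuously to $t_0$ and the monotonicity on $]t_0,T[$ passes to the closed interval.

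Next I would use the explicit form of $\E_i$. By definition,
\[
\E_i(t) = (f_i\circ u)(t) + \frac{m}{\gamma}(f_i\circ u)'(t) + m\Vert \dot u(t)\Vert^2 \geq (f_i\circ u)(t) + \frac{m}{\gamma}(f_i\circ u)'(t),
\]
so writing $g(t) := (f_i\circ u)(t)$ we obtain the first-order differential inequality
\[
g(t) + \frac{m}{\gamma} g'(t) \leq \E_i(t) \leq \E_i(t_0) \quad \text{for a.e. } t \geq t_0.
\]
This is a linear inequality of the form $g' + \frac{\gamma}{m} g \leq \frac{\gamma}{m}\E_i(t_0)$. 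Multiplying by the integrating factor $e^{\frac{\gamma}{m}(t-t_0)}$ and integrating from $t_0$ to $t$ yields
\[
g(t) e^{\frac{\gamma}{m}(t-t_0)} - g(t_0) \leq \E_i(t_0)\bigl(e^{\frac{\gamma}{m}(t-t_0)} - 1\bigr),
\]
and since $g(t_0) = f_i(u_0)$, rearranging gives exactly
\[
f_i(u(t)) \leq \E_i(t_0) + \bigl(f_i(u_0) - \E_i(t_0)\bigr) e^{-\frac{\gamma}{m}(t-t_0)},
\]
which is \eqref{E:upper bound}.

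The only genuinely delicate point is the justification of the integration step: the inequality $g' + \frac{\gamma}{m}g \leq \frac{\gamma}{m}\E_i(t_0)$ holds only almost everywhere, so I would phrase it as: the function $t \mapsto g(t)e^{\frac{\gamma}{m}(t-t_0)}$ is absolutely continuous on compact subintervals (as a product of $C^1$ and smooth functions), its a.e.\ derivative is $e^{\frac{\gamma}{m}(t-t_0)}\bigl(g'(t) + \frac{\gamma}{m}g(t)\bigr) \leq \frac{\gamma}{m}\E_i(t_0)e^{\frac{\gamma}{m}(t-t_0)}$, and hence the fundamental theorem of calculus for absolutely continuous functions gives the integrated bound. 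Everything else is routine; there is no real obstacle beyond bookkeeping with the a.e.\ regularity inherited from Proposition \ref{P:dissipative property}. As a final remark one could note that letting $t \to +\infty$ recovers $\limsup_{t\to\infty} f_i(u(t)) \leq \E_i(t_0)$, foreshadowing the asymptotic analysis of Section 4.
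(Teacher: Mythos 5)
Your proposal is correct and follows essentially the same route as the paper: the monotonicity $\E_i(t)\leq\E_i(t_0)$ from Proposition \ref{P:dissipative property}, dropping the nonnegative term $m\Vert\dot u(t)\Vert^2$, and then the differential inequality $\frac{m}{\gamma}(f_i\circ u)'(t)\leq\E_i(t_0)-(f_i\circ u)(t)$ integrated via Gronwall (which the paper cites and you carry out explicitly with the integrating factor $e^{\frac{\gamma}{m}(t-t_0)}$). Your extra care about a.e.\ regularity and the left endpoint is sound but the paper treats these points as routine.
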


\begin{proof}
It is a trivial consequence of the monotonicity property of $\E_i$ obtained in Proposition \ref{P:dissipative property}. 
 Indeed, we obtain for all $t \in [t_0, +  \infty [$ :
\begin{equation}\label{ub1}
\frac{m}{\gamma} (f_i \circ u)'(t)  \leq \E_i(t_0) - (f_i \circ u)(t).
\end{equation}
The conclusion follows Gronwall's Lemma, applied to $t\mapsto (f_i \circ u)(t) - \E_i(t_0)$.
\end{proof}

This upper bound for the values has two interesting  consequences. The first one is immediate, and gives a useful sufficient condition for the trajectory $u(\cdot)$ to be bounded: 

{
\begin{corollary}\label{P:boundedness sufficient condition}
Suppose that there exists $i\in \{1,...,q\}$ such that $f_i$ 
 is coercive, and globally $L_i$-Lipschitz continuous, with $\gamma^2 > mL_i$. Then any trajectory of ${\rm(IMOG)}$ is bounded.
\end{corollary}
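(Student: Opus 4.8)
The plan is to combine the coercivity of $f_i$ with the uniform upper bound on the values obtained in Corollary \ref{P:upper bound}. Fix a trajectory $u:[t_0,T[\longrightarrow\H$ of $\IMOG$ (on its maximal interval of existence, whatever it is). Since $\nabla f_i$ is globally $L_i$\nobreakdash-Lipschitz continuous, its Lipschitz constant over the (a priori unknown) set $u([t_0,T[)$ is at most $L_i$; together with the assumption $\gamma^2 > mL_i$ this shows that hypothesis $\text{\rm (HP)}_i$ holds along the trajectory. Consequently, Corollary \ref{P:upper bound} applies and yields, for every $t\in[t_0,T[$,
\[
f_i(u(t)) \leq \E_i(t_0) + \bigl(f_i(u_0) - \E_i(t_0)\bigr) e^{-\frac{\gamma}{m}(t-t_0)}.
\]

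Next, I would note that for $t\geq t_0$ the coefficient $e^{-\frac{\gamma}{m}(t-t_0)}$ lies in $]0,1]$, so the right-hand side above is a convex combination of the two constants $f_i(u_0)$ and $\E_i(t_0)$, hence bounded above by $M:=\max\{f_i(u_0),\E_i(t_0)\}$, a constant independent of $t$. Therefore the whole trajectory remains in the sublevel set $\{v\in\H \mid f_i(v)\leq M\}$. Since $f_i$ is coercive, this sublevel set is bounded, and the boundedness of $u(\cdot)$ follows at once.

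The argument is essentially immediate once Corollary \ref{P:upper bound} is invoked; the only point requiring a moment's attention is the first one, namely verifying that the global Lipschitz hypothesis on $\nabla f_i$, combined with $\gamma^2>mL_i$, does provide $\text{\rm (HP)}_i$ over the trajectory (the constant $Lip(\nabla f_i; u([t_0,T[))$ being bounded by the global constant $L_i$), which is what legitimizes the use of the corollary. Beyond that, there is no analytical obstacle.
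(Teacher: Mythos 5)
Your proof is correct and follows exactly the argument the paper intends (the paper calls this consequence ``immediate'' and gives no written proof): global Lipschitz continuity of $\nabla f_i$ guarantees $\text{\rm (HP)}_i$ on any trajectory, Corollary~\ref{P:upper bound} then traps $f_i(u(t))$ below $\max\{f_i(u_0),\E_i(t_0)\}$, and coercivity of $f_i$ bounds the corresponding sublevel set. Your observation that the right-hand side of the bound is a convex combination of $f_i(u_0)$ and $\E_i(t_0)$ is a clean way to justify the uniform constant.
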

}
\noindent The second  consequence is that it tells us how to enforce the interesting property $f_i(u(\cdot)) \leq f_i(u_0)$. Indeed, we know that this dynamic is not a  descent method for the functions because of the inertial effects which can create damped oscillations. But at least, one can choose appropriately the initial velocity so that each point on the trajectory is better than the initial one.

\begin{corollary}\label{P:decreseness sufficient condition}
Suppose that $\text{\rm (HP)}_i$ holds for some $i\in\{1,...,q\}$.
For all $u_0 \in \H$, if $\dot u_0 \in \H$ is chosen to satisfy
\begin{equation}\label{E:decreseness sufficient condition}
\langle \nabla f_i (u_0) , \dot u_0 \rangle \leq - \gamma \Vert \dot u_0 \Vert^2,  
\end{equation}
then $f_i(u(t)) \leq f_i(u_0)$ for all $t \geq t_0$. 
In particular, for all $\lambda \in [0,\frac{1}{\gamma}]$, $\dot u_0=\lambda s(u_0)$ satisfies\footnote{Observe that the set of vectors satisfying this property recalls the notion of pseudo-gradient introduced by Miglierina \cite{Mig04}} (\ref{E:decreseness sufficient condition}).
\end{corollary}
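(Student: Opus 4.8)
The plan is to obtain everything from the exponential upper bound for the values recorded in Corollary~\ref{P:upper bound}, combined with the explicit form of $\E_i(t_0)$ in terms of the initial data. First I would evaluate $\E_i$ at $t_0$ using its definition (\ref{D:energy of the system}): since $(f_i\circ u)'(t_0)=\langle \nabla f_i(u_0),\dot u_0\rangle$, one gets
$\E_i(t_0)=f_i(u_0)+\frac{m}{\gamma}\langle \nabla f_i(u_0),\dot u_0\rangle+m\Vert \dot u_0\Vert^2$.
The assumption (\ref{E:decreseness sufficient condition}) is tailored precisely so that the last two terms cancel up to sign: substituting $\langle \nabla f_i(u_0),\dot u_0\rangle\le -\gamma\Vert \dot u_0\Vert^2$ yields $\E_i(t_0)\le f_i(u_0)$.

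Next I would invoke Corollary~\ref{P:upper bound}, applicable since $\text{\rm (HP)}_i$ is assumed, to get $f_i(u(t))\le \E_i(t_0)+(f_i(u_0)-\E_i(t_0))e^{-\frac{\gamma}{m}(t-t_0)}$ for all $t\ge t_0$. Writing $a:=\E_i(t_0)$ and $b:=f_i(u_0)$, the previous step gives $a\le b$, hence $b-a\ge 0$; since $0<e^{-\frac{\gamma}{m}(t-t_0)}\le 1$ on $[t_0,+\infty[$, the right-hand side is bounded above by $a+(b-a)=b=f_i(u_0)$, which is the desired conclusion $f_i(u(t))\le f_i(u_0)$.

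For the final assertion, take $\dot u_0=\lambda s(u_0)$ with $\lambda\in[0,\frac1\gamma]$ and use the common descent property (\ref{E:common descent property}), namely $\langle \nabla f_i(u_0),s(u_0)\rangle\le -\Vert s(u_0)\Vert^2$. Then $\langle \nabla f_i(u_0),\dot u_0\rangle=\lambda\langle \nabla f_i(u_0),s(u_0)\rangle\le -\lambda\Vert s(u_0)\Vert^2$, whereas $-\gamma\Vert \dot u_0\Vert^2=-\gamma\lambda^2\Vert s(u_0)\Vert^2$. Since $\gamma\lambda\le 1$, we have $-\lambda\Vert s(u_0)\Vert^2\le -\gamma\lambda^2\Vert s(u_0)\Vert^2$, so (\ref{E:decreseness sufficient condition}) holds (the case $\lambda=0$ or $s(u_0)=0$ being trivial).

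I do not anticipate a genuine obstacle here: the argument is an elementary computation plus an application of Corollary~\ref{P:upper bound}. The only point requiring care is to notice that the coefficient $f_i(u_0)-\E_i(t_0)$ multiplying the decaying exponential is nonnegative — that is exactly what hypothesis (\ref{E:decreseness sufficient condition}) buys — so that the bound (\ref{E:upper bound}) is monotone nonincreasing in $t$; without this sign information the inequality would not close up to $f_i(u_0)$.
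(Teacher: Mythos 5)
Your proof is correct and follows exactly the paper's route: you rewrite (\ref{E:decreseness sufficient condition}) as the sign condition $f_i(u_0)-\E_i(t_0)\geq 0$, feed it into the bound of Corollary~\ref{P:upper bound}, and verify the choice $\dot u_0=\lambda s(u_0)$ via the common descent property (\ref{E:common descent property}). The only difference is that you spell out the elementary computations that the paper leaves implicit.
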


\begin{proof}[Proof of Corollary \ref{P:decreseness sufficient condition}]
We see in Corollary \ref{P:upper bound} that the conclusion holds whenever $f_i(u_0)-\E_i(t_0) \geq 0$. This condition, once rewritten, is exactly (\ref{E:decreseness sufficient condition}). Now, consider $\dot u_0=\lambda s(u_0)$ for some $\lambda \in [0,\frac{1}{\gamma}]$. We recall that this steepest descent direction satisfies for all $i \in \{1,...,q\}$, see (\ref{E:common descent property}),
\begin{equation}
\Vert s(u_0) \Vert^2 + \langle \nabla f_i (u_0) , s(u_0) \rangle \leq 0.
\end{equation} 
So it follows easily that (\ref{E:decreseness sufficient condition}) holds for $\lambda s(u_0)$ .
\end{proof}

\if{
\begin{proposition}\label{P:boundedness sufficient condition}
Let $u(\cdot)$ be a trajectory of $(IMOG)$. Suppose that one of the objective functions is coercive, say $f_I$. Then the follwong propositions are equivalent :
\begin{itemize}
	\item[(i)]  $u(\cdot)$ is bounded ,
	\item[(ii)] For all $i \in \{1,...,q\}$, $Lip(\nabla f_i;u([0,+\infty[)) < + \infty$,
	\item[(iii)] $Lip(\nabla f_I;u([0,+\infty[)) < + \infty$.
\end{itemize}
In particular, the trajectory is bounded whenever one of the objective functions is coercive with a globally Lipschitz gradient.
\end{proposition}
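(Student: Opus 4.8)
The plan is to prove the equivalence via the cycle (i) $\Rightarrow$ (ii) $\Rightarrow$ (iii) $\Rightarrow$ (i), the last implication carrying essentially all the weight, and then to deduce the final assertion as a special case. The implication (i) $\Rightarrow$ (ii) is immediate from the standing regularity hypothesis: if $u(\cdot)$ is bounded, its range $u([0,+\infty[)$ lies in some bounded set $B \subset \H$, and since each $\nabla f_i$ is Lipschitz continuous on bounded sets we get $Lip(\nabla f_i; u([0,+\infty[)) \leq Lip(\nabla f_i; B) < +\infty$ for every $i$, which is (ii). The implication (ii) $\Rightarrow$ (iii) is trivial, being the instance $i=I$ of the quantified statement (ii).

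All the content is in (iii) $\Rightarrow$ (i). Set $L_I := Lip(\nabla f_I; u([0,+\infty[))$, finite by (iii), and work under the friction condition $\gamma^2 > m L_I$, i.e. $(\mathrm{HP})_I$ (see the obstacle below). First I would apply Proposition \ref{P:dissipative property} with $i=I$: since $L_I < +\infty$, the energy $\E_I(t) = (f_I \circ u)(t) + \frac{m}{\gamma}(f_I \circ u)'(t) + m\Vert \dot u(t) \Vert^2$ satisfies $\E_I'(t) \leq -\frac{m^2}{\gamma}\Vert \ddot u(t) \Vert^2 - \frac{1}{\gamma}(\gamma^2 - m L_I)\Vert \dot u(t) \Vert^2 \leq 0$, so $\E_I$ is non-increasing. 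Corollary \ref{P:upper bound} (applicable precisely because $(\mathrm{HP})_I$ holds) then yields the uniform bound $f_I(u(t)) \leq \max\{\E_I(0), f_I(u_0)\} =: M$ for all $t \geq 0$. Finally, coercivity of $f_I$ makes the sublevel set $\{v \in \H : f_I(v) \leq M\}$ bounded; since $u(t)$ never leaves it, the trajectory is bounded, which is (i).

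The closing ``in particular'' is then immediate: a coercive $f_I$ with globally $L$-Lipschitz gradient satisfies $Lip(\nabla f_I; u([0,+\infty[)) \leq L < +\infty$, so (iii) holds and the equivalence gives (i) (here $(\mathrm{HP})_I$ reads $\gamma^2 > mL$). The main obstacle is exactly the step (iii) $\Rightarrow$ (i): condition (iii) furnishes only finiteness of $L_I$, whereas the Lyapunov estimate of Proposition \ref{P:dissipative property} renders $\E_I$ monotone only when $\gamma^2 > m L_I$. This gap is intrinsic to the multi-objective setting, where $s(u)$ is not a gradient and $\E_I$ must absorb the chain-rule defect measured by $L_I$; by contrast, for $q=1$ the exact energy $\frac{m}{2}\Vert \dot u \Vert^2 + f(u)$ decreases for every $\gamma > 0$. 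I would therefore carry out the hard direction under the section's standing friction assumption $(\mathrm{HP})_I$, i.e. with $\gamma$ large enough relative to the finite constant $L_I$ supplied by (iii).
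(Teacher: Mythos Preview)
Your cycle (i) $\Rightarrow$ (ii) $\Rightarrow$ (iii) $\Rightarrow$ (i) is the natural route and each step is argued correctly. The paper does not actually supply a proof for this Proposition; the statement sits inside a suppressed conditional block and the live version carrying the same label is the weaker Corollary, which \emph{adds} the hypothesis $\gamma^2 > mL_i$ and is declared ``immediate'' from Corollary~\ref{P:upper bound}. Your (iii) $\Rightarrow$ (i) argument---bound $f_I(u(t))$ via the decrease of $\E_I$, then invoke coercivity of $f_I$---is exactly that immediate proof.

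The obstacle you flag is real and is precisely why the authors abandoned the equivalence formulation: condition (iii) alone yields $L_I < +\infty$ but says nothing about $\gamma^2 > mL_I$, and without the latter Proposition~\ref{P:dissipative property} does not force $\E_I$ to be nonincreasing, so the upper bound on $f_I(u(t))$ is unavailable. Thus (iii) $\Rightarrow$ (i) is not provable from the paper's estimates without the friction condition, and your decision to carry it out under $(\mathrm{HP})_I$ is the correct resolution---it is exactly the extra hypothesis the paper's final Corollary builds in.
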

}\fi

\subsection{Energy estimations}
We will now use the dissipative property of the system to deduce energy estimations for a global solution of ${\rm (IMOG)}$.

\begin{proposition}(Energy estimations)\label{P:energy estimations}
Let $u : [t_0,+\infty[ \longrightarrow \H$ be a bounded global solution of (IMOG) satisfying $\text{\rm (HP)}$. Then,
\begin{enumerate}
	\item[(i)] For all $i \in \{1,...,q\}$, \ $\E_i(t) \downarrow \E_i^\infty \in \R$ whenever $t \to + \infty$.
	\item[(ii)] $\dot u \in L^\infty(t_0,+\infty;\H) \cap L^2(t_0,+\infty;\H)$ and  $\ \lim\limits_{t \to +\infty} \Vert \dot u(t) \Vert = 0$.
	\item[(iii)] $\ddot u \in L^\infty(t_0,+\infty;\H)\cap  L^2(t_0,+\infty;\H)$ and $\ \underset{t \to +\infty}{\mbox{\em liminfess}} \ \Vert \ddot u(t) \Vert = 0$.
	\item[(iv)] For all $i\in\{1,...,q\}$, $(f_i \circ u)' \in L^\infty(t_0,+\infty; \R)$ and $\lim\limits_{t \to +\infty} (f_i \circ u)'(t) = 0$.
	\item[(v)] For all $i\in\{1,...,q\}$, $(f_i \circ u)\in L^\infty(t_0,+\infty; \R)$ and  $\lim\limits_{t \to +\infty} (f_i \circ u)(t) = \E_i^\infty$.
	\item[(vi)]  For all $i\in\{1,...,q\}$, there exists $\theta_i \in L^\infty(t_0,+\infty;\R)$ such that for all $t\in [t_0,T[$, $$m\ddot u(t) + \gamma \dot u(t) + \sum\limits_{i=1}^{q} \theta_i(t) \nabla f_i(u(t))=0 \text{ with } \theta(t) \in \S^q.$$
	In particular, it follows that $\sum\limits_{i=1}^{q} \theta_i(\cdot) (f_i \circ u)' \in L^1(t_0,+\infty;\H)$.
\end{enumerate}
\end{proposition}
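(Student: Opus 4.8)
The plan is to exploit the dissipative property (Proposition \ref{P:dissipative property}) as the single engine from which all six assertions follow, integrating the energy inequality and then bootstrapping. First I would establish (i): under \text{\rm (HP)}, the quantity $\frac{1}{\gamma}(\gamma^2 - mL_i) > 0$, so inequality (\ref{E:dissipative property}) shows $\E_i' \leq 0$ a.e., hence $\E_i$ is nonincreasing on $[t_0,+\infty[$. To get convergence to a finite limit I need $\E_i$ bounded below; this is where boundedness of $u$ enters. Since $u$ is bounded, $f_i \circ u$ is bounded (continuity of $f_i$), and $\ddot u = \frac{1}{m}(s(u) - \gamma \dot u)$ together with the definition of $\E_i$ means I must control $\dot u$ too. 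I would instead argue directly: the three terms $f_i\circ u$ and $m\|\dot u\|^2$ are handled once I know $\dot u$ is bounded, and $\frac{m}{\gamma}(f_i\circ u)'= \frac{m}{\gamma}\langle \nabla f_i(u),\dot u\rangle$ is then bounded by boundedness of $\nabla f_i$ on the (bounded) range of $u$. So the logical order is: first get $\dot u \in L^\infty$, then $\E_i$ bounded below, then (i).

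For (ii)--(iii), I would integrate (\ref{E:dissipative property}) from $t_0$ to $t$: since $\E_i$ is monotone and (once (i) is in hand) bounded below, the total variation $\E_i(t_0) - \E_i^\infty$ is finite, hence
$$\int_{t_0}^{+\infty} \frac{m^2}{\gamma}\|\ddot u(s)\|^2\, ds + \int_{t_0}^{+\infty}\frac{1}{\gamma}(\gamma^2 - mL_i)\|\dot u(s)\|^2\, ds \leq \E_i(t_0) - \E_i^\infty < +\infty,$$
which gives $\dot u, \ddot u \in L^2(t_0,+\infty;\H)$ directly. To bootstrap $L^\infty$ bounds and the limits, I would use that $\frac{d}{dt}\|\dot u\|^2 = 2\langle \dot u,\ddot u\rangle \in L^1$ (Cauchy--Schwarz on two $L^2$ functions), so $\|\dot u\|^2$ has a limit at infinity; being $L^1$ (since $\dot u \in L^2$) that limit must be $0$, giving $\lim\|\dot u(t)\| = 0$ and a fortiori $\dot u \in L^\infty$. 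The $L^\infty$ bound on $\ddot u$ then follows from $m\ddot u = s(u) - \gamma\dot u$ with $s(u)$ bounded (Proposition \ref{P:Holder continuity}, $u$ bounded) and $\dot u$ bounded; $\mbox{liminfess}_{t\to\infty}\|\ddot u(t)\| = 0$ is immediate from $\ddot u \in L^2$. To be fully rigorous I would need to close the small circularity in the previous paragraph: get a preliminary $L^\infty$ bound on $\dot u$ from the $L^2$-plus-$\ddot u \in L^2$ argument applied on a compact-free interval, or simply note that $\dot u \in L^2$ with $\dot u$ absolutely continuous and $\ddot u \in L^2$ already forces $\dot u$ bounded.

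Assertions (iv)--(vi) are then consequences. For (iv): $(f_i\circ u)'(t) = \langle \nabla f_i(u(t)),\dot u(t)\rangle$, so $|(f_i\circ u)'| \leq \|\nabla f_i(u(t))\|\,\|\dot u(t)\|$, which is bounded (both factors bounded) and tends to $0$ since $\|\dot u(t)\|\to 0$. For (v): $f_i\circ u$ is bounded since $u$ is bounded and $f_i$ continuous, and from the definition of $\E_i$, $f_i(u(t)) = \E_i(t) - \frac{m}{\gamma}(f_i\circ u)'(t) - m\|\dot u(t)\|^2 \to \E_i^\infty - 0 - 0$ by (i),(ii),(iv). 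For (vi): by Carath\'eodory's theorem in $\R^q$ (Fermat-type representation already used in (\ref{ge2})), at each $t$ the element $-m\ddot u(t) - \gamma\dot u(t) = \co\nabla f_i(u(t))^0$ is a convex combination $\sum_i\theta_i(t)\nabla f_i(u(t))$ with $\theta(t)\in\S^q$; boundedness of the simplex gives $\theta_i \in L^\infty$ trivially, and $\sum_i\theta_i(f_i\circ u)' = \langle \sum_i\theta_i\nabla f_i(u),\dot u\rangle = -\frac{1}{m}\langle m\ddot u + \gamma\dot u,\dot u\rangle \cdot (-1)$... more cleanly, $\sum_i\theta_i(t)(f_i\circ u)'(t) = \langle -m\ddot u(t)-\gamma\dot u(t),\dot u(t)\rangle$, which is in $L^1$ since $\|\ddot u\|\|\dot u\|$ and $\|\dot u\|^2$ are both $L^1$ by (ii)--(iii). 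The main obstacle is the mild bootstrapping interdependence among (i)--(iii) — one must order the deductions so that $\dot u \in L^\infty$ (needed for $\E_i$ bounded below) is obtained from the $L^2$ estimates rather than from (i), which I handle via the absolute continuity of $\dot u$ together with $\ddot u \in L^2$; everything else is routine once the energy inequality is integrated.
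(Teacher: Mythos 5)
There is a genuine circularity in your treatment of (i)--(iii) that your proposed fixes do not close. You correctly identify that the logical order must be ``$\dot u\in L^\infty$ first, then $\E_i$ bounded below, then the $L^2$ estimates,'' but your only mechanism for obtaining $\dot u\in L^\infty$ is the implication $\dot u,\ddot u\in L^2(t_0,+\infty;\H)\Rightarrow\dot u\in L^\infty$ --- and those global $L^2$ bounds come from integrating (\ref{E:dissipative property}) over all of $[t_0,+\infty[$, which requires the total dissipation $\E_i(t_0)-\inf_t\E_i(t)$ to be finite, i.e.\ exactly the lower bound on $\E_i$ you are trying to establish. (Your two suggested repairs both presuppose the global $L^2$ bounds, so they do not break the loop.) The missing idea is that the \emph{upper} bound $\E_i(t)\leq\E_i(t_0)$, which follows from monotonicity alone, already pins down $\Vert\dot u(t)\Vert$: writing
\begin{equation*}
0\;\geq\;\E_i(t)-\E_i(t_0)\;\geq\;m\Vert\dot u(t)\Vert^2+\tfrac{m}{\gamma}(f_i\circ u)'(t)+c,\qquad c:=\inf_{s\geq t_0}f_i(u(s))-\E_i(t_0),
\end{equation*}
and bounding $(f_i\circ u)'(t)\geq -M\Vert\dot u(t)\Vert$ with $M:=\sup_t\Vert\nabla f_i(u(t))\Vert<+\infty$ (trajectory bounded, gradients Lipschitz on bounded sets), one gets $m\Vert\dot u(t)\Vert^2-b\Vert\dot u(t)\Vert+c\leq 0$ with $b=\frac{m}{\gamma}M$; since $m>0$ this quadratic is nonpositive only on a compact interval independent of $t$, so $\dot u\in L^\infty$. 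Equivalently, $mX^2-bX$ is bounded below on $[0,+\infty[$, so $\E_i$ is bounded below whether or not $\dot u$ is known to be bounded. This is the step the paper uses to start the chain of deductions; without it, or something equivalent, your argument does not get off the ground.

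Two smaller points. In (vi), ``boundedness of the simplex gives $\theta_i\in L^\infty$ trivially'' is too quick: the convex-combination representation of $\co\nabla f_i(u(t))^0$ need not be unique, and membership in $L^\infty$ requires a \emph{measurable} choice of $t\mapsto\theta(t)$; the paper obtains one from a measurable selection theorem applied to the Carath\'eodory integrand $j(t,\theta)=\Vert\sum_i\theta_i\nabla f_i(u(t))\Vert$. (The final $L^1$ claim of (vi) survives regardless, since $\sum_i\theta_i(t)(f_i\circ u)'(t)=\langle-m\ddot u(t)-\gamma\dot u(t),\dot u(t)\rangle$ is measurable as written and is bounded by $m\Vert\ddot u\Vert\,\Vert\dot u\Vert+\gamma\Vert\dot u\Vert^2\in L^1$.) The remainder of your argument --- integrating the dissipation inequality for the $L^2$ bounds, using $\frac{d}{dt}\Vert\dot u\Vert^2\in L^1$ for the limit in (ii) (the paper instead uses that $\dot u$ is Lipschitz and $L^2$, an equally valid variant), and the deductions (iv)--(v) --- is correct and matches the paper's proof once the $L^\infty$ bound on $\dot u$ is in place.
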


\begin{proof}
We start by proving that $\dot u \in L^\infty(t_0,+\infty;\H)$, from which the other results will follow easily.
Take any $i \in \{1,...,q\}$, and define $c:= \inf\limits_{t \geq t_0} f_i(u(t)) - \E_I(t_0)$ and $M:=\max\limits_{i\in\{1,...,q\}} \ \sup\limits_{t\geq t_0} \Vert \nabla f_i(u(t)) \Vert$. 
Given that the gradients $ \nabla f_i$ are Lipschitz continuous on bounded sets, we deduce (using the mean value theorem) that the functions $f_i$ are bounded on bounded sets.
Since the trajectory is bounded, it follows that $M$ and $c$ are finite. 
In particular it implies that $m \ddot u + \gamma \dot u  \in L^\infty(t_0,+\infty;\H)$, since, according to (IMOG), we  have for a.e. $t \geq t_0$ that $-m \ddot u(t) -\gamma \dot u(t) \in \co \{\nabla f_i(u(t))\}$ which is bounded by $M$.

Using the monotonicity property of $\E_i$ (see Proposition \ref{P:dissipative property}), we have for all $t \geq t_0$:

\begin{equation}\label{E:ee1}
0 \geq \E_i(t) - \E_i(t_0) \geq m \Vert \dot u(t) \Vert^2 + \frac{m}{\gamma} (f_i\circ u)'(t) +c.
\end{equation}
Using Cauchy-Schwarz inequality and the definition of $M$, one has
\begin{equation}\label{E:ee2}
(f_i\circ u)'(t) = \langle \nabla f_i(u(t)), \dot u(t) \rangle \geq -\Vert \nabla f_i (u(t)) \Vert \Vert \dot u(t) \Vert \geq - M \Vert \dot u(t) \Vert.
\end{equation}
If we note $b=\frac{m}{\gamma}M$, we obtain
\begin{equation}\label{E:ee3}
0  \geq m \Vert \dot u(t) \Vert^2 - b \Vert \dot u(t) \Vert +c.
\end{equation}

\if{\color{blue}USELESS? Suppose in a first time that $c\leq 0$. Then it would follow, using (\ref{E:ee3}), that $  b \Vert \dot u(t) \Vert \geq a \Vert \dot u(t) \Vert^2 $ and so $\dot u \in L^\infty(0,+\infty;H)$. So we can assume that $c >0$.}\fi

\if{Suppose in a second time that $\alpha =0$. Then, according to $\alpha \ddot u + \gamma \dot u  \in L^\infty(0,+\infty;H)$, we would have again $\dot u \in L^\infty(0,+\infty;H)$. So we can assume that $\alpha >0$, which implies that $a >0$ in (\ref{E:ee3}).}\fi 

\noindent If we consider now the real polynomial $mX^2 - bX +c$ with $m>0$, we can see that it takes negative values on a  compact interval, independent of $t$. Since $\Vert \dot u(t) \Vert$ lies therein, we conclude that $\dot u \in L^\infty(t_0,+\infty;\H)$.

We can now derive the other properties, and we start with (i). The decreasing property of the energies $\E_i$ (see Proposition \ref{P:dissipative property}) ensures the existence of a limit $\E_i^\infty$, taking eventually the value $-\infty$. But now we can prove that for all $i\in\{1,...,q\}$, $\E_i^\infty\in \R$. Indeed, using the same inequality as in (\ref{E:ee2}),
\begin{equation}\label{E:ee4}
\E_i^\infty = \lim\limits_{t\to +\infty} \ \E_i(t) \geq \inf\limits_{t\geq t_0} f_i(u(t)) - \frac{m}{\gamma} M \Vert \dot u \Vert_{L^\infty(t_0,+\infty;\H)}  > - \infty.
\end{equation}

\if{It suffices now to prove that $\liminf\limits_{t\to +\infty} f_i(u(t))$ is bounded from below.
Let $t_n \to +\infty$ be a sequence such that $\liminf\limits_{t\to +\infty} f_i(u(t)) =\lim\limits_{n\to +\infty} f_i(u(t_n))$. 
Since $u(t_n)$ is bounded, it admits a weak limit point that we note $u^\infty$, and by taking eventually an appropriate subsequence we can assume that $u(t_n)$ converges to $u^\infty$. 
The objective functions being convex continuous, they are weakly lower-semicontinuous. 
So $f_i(u^\infty) \leq \liminf\limits_{t\to +\infty} f_i(u(t_n)) = \liminf\limits_{t\to +\infty} f_i(u(t))$ which gives us the desired lower bound.}\fi

We now prove (iii). 
Since $m \ddot u + \gamma \dot u$ and $\dot u$ lie in $L^\infty(t_0,+\infty;\H)$, we directly obtain from $m >0$ that $\ddot u  \in L^\infty(t_0,+\infty;\H)$. 
For the $L^2$ estimation, use Proposition \ref{P:dissipative property}  to obtain:
\begin{equation}
\frac{m^2}{\gamma} \displaystyle\int_{t_0}^{+\infty} \Vert \ddot u(t) \Vert^2\ dt \leq \int_{t_0}^{+\infty} -\frac{d}{dt} \E_i(t) \ dt \ = \E_i(t_0) - \E_i^\infty.
\end{equation}
It follows that $ \ddot u  \in L^2(t_0,+\infty;\H)$, and then, $\underset{t \to +\infty}{\mbox{ liminfess}} \Vert \ddot u(t) \Vert = 0$.

Let us now to prove (ii). Using exactly the same argument as for $\ddot u$, one obtains $ \dot u  \in L^2(t_0,+\infty\;H)$. Moreover, we know that $\dot u$ is Lipschitz continuous on $[t_0,+\infty[$ (since $\ddot u \in L^\infty(t_0,+\infty;\H)$), so it follows that $\lim\limits_{t \to +\infty} \Vert \dot u(t) \Vert = 0$.

We continue with items (iv) and (v). From Cauchy-Schwarz inequality, $\vert (f_i \circ u)'(t)\vert  \leq M \Vert \dot u(t) \Vert$ for all $t \geq t_0$. 
As a direct consequence of (ii), we deduce  $(f_i \circ u)'\in L^\infty(t_0,+\infty;\H)$ and $\lim\limits_{t \to +\infty} (f_i\circ u)'(t) = 0$. 
Then it follows directly from (i) that $\lim\limits_{t \to +\infty} (f_i\circ u)(t) = \E_i^\infty$, and $(f_i \circ u)\in L^\infty(t_0,+\infty;\H)$. 

We end the proof with item (vi). It is clear from the definition of (IMOG) that for all $t\geq t_0$, there exists $\theta(t)=(\theta_i(t))_{1\leq i \leq q} \in \S^q$ such that $m\ddot u(t) + \gamma \dot u(t) + \sum\limits_{i=1}^{q} \theta_i(t) \nabla f_i(u(t)) =0$. 
To get $\theta_i \in L^\infty(t_0,+\infty;\R)$, the whole point is to verify that it can be taken measurable. 
For this, we write $\theta(t)$ as a solution of the following optimality problem
\begin{equation}
\theta(t) \in \underset{\theta \in \S^q}{\mbox{ argmin}} \ j(t,\theta), \ \text{ where } j(t,\theta):=\Vert \sum\limits_{i=1}^{q} \theta_i \nabla f_i(u(t))  \Vert.
\end{equation}
Since $j$ is a Caratheodory integrand, we are guaranteed of the existence of a measurable selection $\theta : t \mapsto \theta(t) \in \underset{\theta \in \S^q}{\mbox{ argmin}} \ j(t,\theta)$ (see \cite[Propositions 14.6, 14.32 and 14.37]{RocWet}). Now we can write  
\begin{equation}
\sum\limits_{i=1}^{q} \theta_i(t) (f_i \circ u)'(t)  = {\sum\limits_{i=1}^{q} \theta_i(t) \langle \nabla f_i (u(t)) , \dot u(t) \rangle }= \langle -m \ddot u(t) - \gamma \dot u(t) , \dot u(t)\rangle
\end{equation}
where $\dot u, \ddot u \in L^2(t_0,+\infty;\H)$. So, using the Cauchy-Schwarz inequality and the measurability of $\theta_i$, we get directly that $\sum\limits_{i=1}^{q} \theta_i(\cdot) (f_i \circ u)' \in L^1(t_0,+\infty;\H)$.
\end{proof}

\subsection{Convergence of the trajectories of (IMOG)}

We present here the main result of this section. Under a convexity assumption, we show that the bounded trajectories of (IMOG) weakly converge to a solution.

\begin{theorem}\label{T:convergence result}
Suppose that the objective functions $f_i$ are convex. Then any bounded trajectory of (IMOG) $u : [t_0,+\infty[ \longrightarrow \H$ satisfying $\text{\rm (HP)}$ converges weakly to a weak Pareto optimum.
\end{theorem}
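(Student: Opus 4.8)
The plan is to use the Opial lemma for weak convergence, which requires two ingredients: (a) for every weak Pareto optimum $p$ (or at least for every point in a suitable set containing all weak limit points of the trajectory), the limit $\lim_{t\to+\infty}\|u(t)-p\|$ exists; and (b) every weak limit point of $u(\cdot)$ is a weak Pareto optimum. Since in the convex case Pareto critical points coincide with weak Pareto points (Proposition \ref{P:Fermat CNS}), it suffices to work with the set $\S$ of Pareto critical points, i.e. the zero set of the steepest descent field $s$.

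First I would establish (b). Let $u(t_n)\rightharpoonup \bar u$ along some sequence $t_n\to+\infty$. From Proposition \ref{P:energy estimations}(vi) we have $m\ddot u(t)+\gamma\dot u(t)+\sum_i\theta_i(t)\nabla f_i(u(t))=0$ with $\theta(t)\in\S^q$, and from (ii)--(iii) both $\dot u(t_n)\to 0$ and $\ddot u(t_n)\to 0$ can be arranged (using that $\lim\|\dot u(t)\|=0$ everywhere and that $\liminfess\|\ddot u\|=0$, one can choose $t_n$ along which $\ddot u(t_n)\to0$ as well). Hence $\sum_i\theta_i(t_n)\nabla f_i(u(t_n))\to 0$. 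Passing to a further subsequence so that $\theta(t_n)\to\bar\theta\in\S^q$, and using that each $\nabla f_i$ is (norm-to-weak, or in finite dimension norm-to-norm) continuous together with the boundedness of the trajectory, we get $\sum_i\bar\theta_i\nabla f_i(\bar u)=0$, so $0\in\co\nabla f_i(\bar u)$, i.e. $\bar u$ is Pareto critical, hence a weak Pareto optimum by Proposition \ref{P:Fermat CNS}.

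Next I would establish (a). Fix $p\in\S$, so $0\in\co\nabla f_i(p)$: there is $\bar\theta\in\S^q$ with $\sum_i\bar\theta_i\nabla f_i(p)=0$. Consider the anchor function $h_p(t):=\tfrac12\|u(t)-p\|^2$. Using (IMOG) in the form of Proposition \ref{P:energy estimations}(vi), compute
\begin{equation*}
m\ddot h_p(t)+\gamma\dot h_p(t)=m\|\dot u(t)\|^2-\Big\langle \sum_i\theta_i(t)\nabla f_i(u(t)),\,u(t)-p\Big\rangle.
\end{equation*}
The idea is to bound the last term from above using convexity: for the weights $\bar\theta$ attached to $p$ one would like $\langle \sum_i\bar\theta_i\nabla f_i(u(t)),u(t)-p\rangle\ge \sum_i\bar\theta_i(f_i(u(t))-f_i(p))\ge$ (something controllable). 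The mismatch between $\theta(t)$ and $\bar\theta$ is the delicate point; here I would instead exploit the variational inequality defining the projection, namely $\langle m\ddot u(t)+\gamma\dot u(t),\;\nabla f_i(u(t))+m\ddot u(t)+\gamma\dot u(t)\rangle\le 0$ for each $i$ (as in the proof of Proposition \ref{P:dissipative property}), combine it convexly with weights $\bar\theta$ and use $\sum_i\bar\theta_i\nabla f_i(p)=0$ together with convexity $\langle\nabla f_i(u(t)),u(t)-p\rangle\ge f_i(u(t))-f_i(p)$. This should yield an inequality of the form $m\ddot h_p(t)+\gamma\dot h_p(t)\le g(t)$ where $g\in L^1(t_0,+\infty;\R)$, the integrability coming from $\dot u,\ddot u\in L^2$ (Proposition \ref{P:energy estimations}(ii)--(iii)) and from the term $\sum_i\bar\theta_i(f_i\circ u)'$, whose convergence/integrability is controlled by (iv) and (vi). A standard lemma on second-order differential inequalities with a damping term and an $L^1$ right-hand side (of the type used for (HBF), e.g. Alvarez) then gives that $\dot h_p\in L^1$, hence $h_p(t)$, and therefore $\|u(t)-p\|$, has a limit as $t\to+\infty$.

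With (a) and (b) in hand, Opial's lemma applies: since $\H$ is finite-dimensional here, weak convergence is just convergence, but the argument is the same — take two limit points $\bar u_1,\bar u_2\in\S$, use existence of $\lim\|u(t)-\bar u_j\|$ for $j=1,2$ and the parallelogram identity to force $\bar u_1=\bar u_2$, so $u(t)$ converges to a single point, which by (b) is a weak Pareto optimum. The main obstacle I anticipate is Step (a): controlling the cross term $\langle\sum_i\theta_i(t)\nabla f_i(u(t)),u(t)-p\rangle$ when the running weights $\theta(t)$ need not match the weights $\bar\theta$ certifying criticality of $p$. Routing this through the projection variational inequality with the fixed weights $\bar\theta$, rather than through a direct convexity estimate on the running combination, is what makes the $L^1$ bound on the right-hand side achievable, and getting that step right — and identifying precisely which second-order differential-inequality lemma to invoke — is where the real work lies.
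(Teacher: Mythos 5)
Your overall architecture (Opial's lemma: cluster points lie in a set $S$, plus existence of $\lim_{t\to+\infty}\Vert u(t)-z\Vert$ for $z\in S$) is the same as the paper's, but there is a genuine gap in your step (a), and it comes from your choice of $S$. You take $S$ to be the set of Pareto critical (equivalently, weak Pareto) points, and the Fej\'er property with respect to that set is not obtainable by your argument. Writing $m\ddot h_p(t)+\gamma\dot h_p(t)=m\Vert\dot u(t)\Vert^2+\sum_i\theta_i(t)\langle\nabla f_i(u(t)),p-u(t)\rangle$ and using convexity bounds the last sum by $\sum_i\theta_i(t)\,(f_i(p)-f_i(u(t)))$; for a general weak Pareto point $p$, some $f_i(p)$ may exceed $\lim_{t}f_i(u(t))$ by a fixed positive amount, and since the running weights $\theta_i(t)$ need not vanish, this right-hand side need not be integrable. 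Your proposed repair via the projection variational inequality taken with the fixed weights $\bar\theta$ does not close the gap: that inequality only yields $\langle -m\ddot u(t)-\gamma\dot u(t),\sum_i\bar\theta_i\nabla f_i(u(t))\rangle\geq\Vert m\ddot u(t)+\gamma\dot u(t)\Vert^2$, a relation between the driving direction and the $\bar\theta$-combination of gradients, which transfers no information to the inner product with $p-u(t)$.

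The paper resolves this by applying Opial's lemma to the different set $S:=\{x\in\H:\ f_i(x)\leq\lim_{t\to+\infty}f_i(u(t))\ \text{for all } i\}$. Condition (i) then follows from weak lower semicontinuity of the convex $f_i$, and for $z\in S$ the energy monotonicity $\E_i(t)\geq\E_i^\infty\geq f_i(z)$ combined with convexity gives, \emph{for each index $i$ separately}, $\langle\nabla f_i(u(t)),z-u(t)\rangle\leq\frac{m}{\gamma}(f_i\circ u)'(t)+m\Vert\dot u(t)\Vert^2$; because this holds per index, the running weights $\theta_i(t)$ cause no mismatch at all, and the weighted sum is $L^1$ by Proposition \ref{P:energy estimations}(vi). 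The limit $u^\infty\in S$ is only identified as Pareto critical \emph{afterwards}, using that the essential liminf of $\Vert m\ddot u(t)+\gamma\dot u(t)\Vert$ is zero together with the demiclosedness of $u\rightrightarrows\co\{\nabla f_i(u)\}$ (Lemma \ref{L:demiclosed graph}). This order of operations also repairs a secondary defect of your step (b): from the essential liminf condition you can only extract \emph{some} sequence $s_n\to+\infty$ with $\ddot u(s_n)\to0$, not one subordinate to an arbitrary cluster sequence $t_n$, so your claim that \emph{every} weak cluster point is Pareto critical is not actually established (and your passage to the limit in $\nabla f_i(u(t_n))$ under weak convergence would need the demiclosedness lemma rather than mere continuity in the infinite-dimensional case). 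Once the whole trajectory is already known to converge, a single such sequence $s_n$ suffices.
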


\noindent We sketch here the main points of the proof. 
The convergence essentially relies on Opial's Lemma that we recall below (note $\Omega [u(t)]$ the set of weak sequential cluster points of the trajectory) : 

\begin{lemma}\label{L:Opial} (Opial) \ Let   $S$ be
a non empty subset of $ \H$, and  $u: [t_0, +\infty [ \to \H$. Assume that
\begin{eqnarray*}
(i) && \Omega [u(t)] \subset S; \\
(ii) &&\mbox{for every }z\in S,\>  \lim\limits_{t \to  +\infty} 	\| u(t)- z 	\|  \mbox{ exists}.
\end{eqnarray*}
Then $u(t)$ weakly converges to some element
$u^{\infty}\in S$.
\end{lemma}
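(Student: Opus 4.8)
The plan is to prove Opial's Lemma in three stages: establish that the trajectory is bounded and that $\Omega[u(t)]$ is nonempty, then show $\Omega[u(t)]$ reduces to a single point, and finally upgrade this to weak convergence of the whole trajectory.

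First I would fix an arbitrary $z_0 \in S$, which exists since $S \neq \emptyset$. By hypothesis (ii) the function $t \mapsto \|u(t) - z_0\|$ has a finite limit as $t \to +\infty$, so it is bounded, and therefore $u(\cdot)$ is bounded on $[t_0,+\infty[$. Since $\H$ is a Hilbert space, bounded sequences admit weakly convergent subsequences; hence for any $t_n \to +\infty$ the bounded sequence $(u(t_n))$ has a weak sequential cluster point, so $\Omega[u(t)]$ is nonempty, and by (i) it is contained in $S$.

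The heart of the argument is uniqueness of the weak cluster point. Suppose $u_1^\infty, u_2^\infty \in \Omega[u(t)]$; by (i) both lie in $S$, so by (ii) both $\lim_{t\to+\infty}\|u(t)-u_1^\infty\|$ and $\lim_{t\to+\infty}\|u(t)-u_2^\infty\|$ exist. I would then expand
\[
\|u(t)-u_1^\infty\|^2 - \|u(t)-u_2^\infty\|^2 = 2\langle u(t),\, u_2^\infty - u_1^\infty\rangle + \|u_1^\infty\|^2 - \|u_2^\infty\|^2,
\]
whose left-hand side converges as $t\to+\infty$; hence $t\mapsto \langle u(t),\, u_2^\infty - u_1^\infty\rangle$ converges to some limit $\ell$. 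Evaluating this limit along a sequence $t_n\to+\infty$ with $u(t_n)\rightharpoonup u_1^\infty$ gives $\ell = \langle u_1^\infty,\, u_2^\infty - u_1^\infty\rangle$, while evaluating along $s_n\to+\infty$ with $u(s_n)\rightharpoonup u_2^\infty$ gives $\ell = \langle u_2^\infty,\, u_2^\infty - u_1^\infty\rangle$. Subtracting the two expressions for $\ell$ yields $\|u_1^\infty - u_2^\infty\|^2 = 0$, so $u_1^\infty = u_2^\infty$ and $\Omega[u(t)] = \{u^\infty\}$ is a singleton.

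It remains to pass from uniqueness of the weak sequential cluster point to weak convergence of the full (continuous-parameter) trajectory. Fixing any $\xi \in \H$, I would argue by contradiction: if $\langle u(t), \xi\rangle$ did not converge to $\langle u^\infty, \xi\rangle$, there would exist $\varepsilon > 0$ and $t_n \to +\infty$ with $|\langle u(t_n)-u^\infty,\, \xi\rangle| \geq \varepsilon$; but the bounded sequence $(u(t_n))$ admits a subsequence weakly converging to a point of $\Omega[u(t)] = \{u^\infty\}$, along which $\langle u(t_{n_k}),\, \xi\rangle \to \langle u^\infty,\, \xi\rangle$, a contradiction. Hence $\langle u(t),\xi\rangle \to \langle u^\infty, \xi\rangle$ for every $\xi \in \H$, i.e. $u(t) \rightharpoonup u^\infty$, with $u^\infty \in S$ by (i). I expect the main obstacle to be precisely this last passage from sequential cluster points to convergence of the whole trajectory; the scalar contradiction argument above handles it cleanly, avoiding any metrizability assumption on the weak topology.
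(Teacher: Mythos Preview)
Your proof is correct and follows the standard route to Opial's Lemma: boundedness from hypothesis (ii), uniqueness of the weak cluster point via the polarization identity trick, and then the scalar subsequence argument to upgrade to full weak convergence. The paper itself does not prove this lemma; it is merely recalled as a known tool before being applied in the proof of Theorem~\ref{T:convergence result}, so there is no alternative argument in the paper to compare against.
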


\noindent It is applied to the set 
$$S:=\{x\in \H \ | \ f_i(x) \leq \lim\limits_{t\to +\infty} f_i(u(t)) \ \text{ for all } i \in \{1,...q\} \ \},$$
for which (i) is easy to obtain. The key point to prove the F\'ejer property (ii) is that $h(t):=\frac{1}{2}\Vert u(t) - z \Vert^2$ satisfies a differential inequality. Indeed we have the following result from \cite[Lemma 4.2]{AttGouRed00} or \cite[Lemma 2.3]{AttMai11} :

\begin{lemma}\label{L:Alvarez}
Let $h \in \C^1(t_0,+\infty;\R)$ be a positive function satisfying $m \ddot h + \gamma \dot h \leq g$ where $m,\gamma >0 $ and $g \in L^1(t_0,+\infty;\R)$. Then $\lim\limits_{t \to  +\infty} h(t)$ exists.
\end{lemma}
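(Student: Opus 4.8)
The plan is to integrate the differential inequality $m\ddot h + \gamma \dot h \le g$ twice, exploiting the damping term $\gamma \dot h$ to control the inertial term, and then to show that the resulting object is monotone-plus-something-integrable, so that it has a limit. First I would rewrite the inequality by multiplying through by the integrating factor $e^{\frac{\gamma}{m}t}$, which gives $\frac{d}{dt}\bigl(e^{\frac{\gamma}{m}t}\dot h(t)\bigr) \le \frac{1}{m}e^{\frac{\gamma}{m}t}g(t)$. Integrating from $t_0$ to $t$ and using that $g \in L^1$ only gives a bound involving $e^{\frac{\gamma}{m}t}\int_{t_0}^t e^{\frac{\gamma}{m}s}|g(s)|\,ds$; to see this is controlled, I would note $\int_{t_0}^t e^{\frac{\gamma}{m}s}|g(s)|\,ds \le e^{\frac{\gamma}{m}t}\|g\|_{L^1}$ in the worst case, but more carefully one gets $\dot h(t) \le \dot h(t_0)e^{-\frac{\gamma}{m}(t-t_0)} + \frac{1}{m}\int_{t_0}^t e^{-\frac{\gamma}{m}(t-s)}g(s)\,ds$. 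The second term is a convolution of an $L^1$ function with the $L^1$ kernel $\frac{1}{m}e^{-\frac{\gamma}{m}\cdot}\mathbf{1}_{[0,\infty)}$, hence is itself an $L^1$ function of $t$ (by Young's inequality), and its positive part $[\dot h]^+$ is therefore integrable on $[t_0,+\infty)$.

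Once I know $[\dot h]^+ \in L^1(t_0,+\infty;\R)$, the standard trick finishes it: define $\psi(t) := h(t) - \int_{t_0}^t [\dot h(s)]^+\,ds$. Then $\dot\psi(t) = \dot h(t) - [\dot h(t)]^+ = -[\dot h(t)]^- \le 0$, so $\psi$ is nonincreasing. Since $h \ge 0$ and $\int_{t_0}^t [\dot h(s)]^+\,ds$ is bounded (by $\|[\dot h]^+\|_{L^1}$), $\psi$ is bounded below, hence $\psi(t)$ has a limit as $t \to +\infty$. As $\int_{t_0}^t [\dot h(s)]^+\,ds$ also converges (monotone and bounded), it follows that $h(t) = \psi(t) + \int_{t_0}^t [\dot h(s)]^+\,ds$ converges as $t \to +\infty$. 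That is exactly the claim.

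The main obstacle is the first paragraph: obtaining $[\dot h]^+ \in L^1$ rather than merely $\dot h$ bounded. The point to be careful about is that after the first integration one must genuinely use the decaying exponential kernel $e^{-\frac{\gamma}{m}(t-s)}$ (which integrates to the finite constant $m/\gamma$ over $s \in (-\infty,t]$), together with the convolution structure, to conclude integrability; a naive bound would only yield boundedness of $\dot h$, which is not enough to run the $\psi$-monotonicity argument. One should also note that $h \in C^1$ with $\dot h$ locally absolutely continuous (or at worst the inequality holding a.e. in the sense of distributions), which is what legitimizes the integration-by-parts steps; this is automatic in the intended application since $h(t) = \frac12\|u(t)-z\|^2$ with $u$ of class $C^2$, so no regularity subtlety arises there.
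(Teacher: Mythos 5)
Your proof is correct and is essentially the standard argument behind this lemma, which the paper itself does not prove but simply imports from \cite[Lemma 4.2]{AttGouRed00} and \cite[Lemma 2.3]{AttMai11}: the integrating factor $e^{\frac{\gamma}{m}t}$ plus Young's inequality to get $[\dot h]^+\in L^1(t_0,+\infty;\R)$, followed by the monotone decomposition $h(t)=\psi(t)+\int_{t_0}^t[\dot h(s)]^+\,ds$. Your closing remark on the regularity of $\dot h$ correctly identifies the only point the statement leaves implicit, and it is indeed harmless in the intended application.
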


\noindent Once obtained the weak convergence of the trajectory, the characterisation of its limit point as a weak Pareto point is a direct consequence of the demiclosedness property of $u \rightrightarrows \co\{\nabla f_i(u)\}$ (see for example \cite[Lemma 2.4]{AttGarGou15}) :

\begin{lemma}\label{L:demiclosed graph}
If $u_n^* \in \co \{\nabla f_i(u_n) \}$ with $\lim\limits_{n \to +\infty} u_n^* =0$ and $w-\lim\limits_{n \to +\infty} u_n =u_\infty$, then $0 \in \co \{\nabla f_i(u_\infty) \}$.
\end{lemma}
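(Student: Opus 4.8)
The plan is to exploit the convexity of the objective functions (in force throughout this subsection via Theorem~\ref{T:convergence result}) in order to replace the continuity of the gradients---which is unavailable under weak convergence---by a monotone variational inequality, and then to recover the conclusion at the weak limit point through a Minty-type segment argument. The subtlety to keep in mind is that weak convergence $u_n \rightharpoonup u_\infty$ does not transmit through the merely continuous maps $\nabla f_i$, so one cannot simply pass to the limit inside $\co\{\nabla f_i(u_n)\}$.

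First I would unpack the membership $u_n^* \in \co\{\nabla f_i(u_n)\}$: there exist coefficients $\theta^n=(\theta_i^n)_i \in \S^q$ with $u_n^* = \sum_{i=1}^q \theta_i^n \nabla f_i(u_n)$. Since the simplex $\S^q$ is compact, after passing to a subsequence I may assume $\theta^n \to \theta \in \S^q$. The weak convergence $u_n \rightharpoonup u_\infty$ also guarantees that $\{u_n\}$ is bounded, a fact I will use for the limit passages below.

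Next, using that each $f_i$ is convex and differentiable, hence $\nabla f_i$ is monotone, I would write, for an arbitrary fixed $y \in \H$,
$$\sum_{i=1}^q \theta_i^n \langle \nabla f_i(u_n) - \nabla f_i(y), u_n - y\rangle \geq 0,$$
which rearranges into $\langle u_n^* - \sum_{i=1}^q \theta_i^n \nabla f_i(y),\, u_n - y\rangle \geq 0$. Now I pass to the limit $n \to \infty$: the term $\langle u_n^*, u_n - y\rangle$ tends to $0$ since $u_n^* \to 0$ strongly pairs against the bounded sequence $u_n - y$, while $\sum_{i=1}^q \theta_i^n \nabla f_i(y) \to \sum_{i=1}^q \theta_i \nabla f_i(y)$ strongly (a finite sum with $y$ fixed and $\theta^n \to \theta$) pairs against $u_n - y \rightharpoonup u_\infty - y$ to give the expected limit. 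This yields the variational inequality
$$\langle \nabla g(y),\, y - u_\infty\rangle \geq 0 \quad \text{for all } y \in \H, \qquad \text{with } g := \sum_{i=1}^q \theta_i f_i.$$

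Finally I would read off the conclusion from this inequality. The function $g$ is convex and of class $C^1$ with $\nabla g = \sum_{i=1}^q \theta_i \nabla f_i$ continuous. Choosing $y = u_\infty + t v$ for $t>0$ and arbitrary $v \in \H$, dividing by $t$, and letting $t \to 0^+$ while invoking the strong continuity of $\nabla g$ at $u_\infty$, gives $\langle \nabla g(u_\infty), v\rangle \geq 0$; applying this to both $v$ and $-v$ forces $\nabla g(u_\infty) = 0$, that is $0 = \sum_{i=1}^q \theta_i \nabla f_i(u_\infty) \in \co\{\nabla f_i(u_\infty)\}$. The main obstacle is the limit passage in the previous paragraph: it is precisely the monotone inequality that lets the strong convergence $u_n^* \to 0$ cooperate with the weak convergence $u_n \rightharpoonup u_\infty$, with continuity of the gradients being invoked only at the single limit point $u_\infty$, where strong convergence is restored by the segment trick.
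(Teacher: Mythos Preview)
Your proof is correct. The paper does not actually prove this lemma; it merely quotes it from \cite[Lemma~2.4]{AttGarGou15}, so there is no in-paper argument to compare against line by line. Your approach---extract a limit $\theta$ of the simplex coefficients, use the monotonicity of each $\nabla f_i$ (available because of the convexity assumption in force in this subsection) to obtain the Minty-type inequality $\langle \nabla g(y), y-u_\infty\rangle \geq 0$ for all $y$, and then linearize along a segment $y=u_\infty+tv$---is the standard demiclosedness argument for sums of maximal monotone (here, gradient) operators, and every limit passage you make is justified: the strong--bounded pairing $\langle u_n^*, u_n-y\rangle\to 0$, the strong--weak pairing for the $\nabla f_i(y)$ term, and the norm continuity of $\nabla g$ invoked only at the single point $u_\infty$ after the segment trick restores strong convergence.

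One small remark: you correctly note that convexity is essential to your argument. The paper's later Remark invokes this same lemma in the nonconvex setting; that usage is consistent only in finite dimension (where weak and strong convergence coincide and continuity of the $\nabla f_i$ alone suffices), which is indeed the setting of the existence results in Section~3. Your proof, by contrast, works in a general Hilbert space under the convexity hypothesis of Theorem~\ref{T:convergence result}, which is exactly where the lemma is applied.
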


\begin{proof}[Proof of Theorem \ref{T:convergence result}]
Since $u$ is bounded, there exists some $t_n \to +\infty$ such that $u(t_n)$ converges weakly to some $u^\infty$. 
For all $i \in \{1,...q\}$, since $f_i$ is convex continuous it is in particular weakly semi-continuous. Hence, using Proposition \ref{P:energy estimations} we get
\begin{equation}\label{E:cr1}
f_i(u^\infty) \leq \liminf\limits_{n \to + \infty} f_i(u(t_n)) = \lim\limits_{t \to + \infty} f_i(u(t)).
\end{equation}
This proves that $\Omega[u(t)] \subset S \neq \emptyset$. 
To obtain convergence of the trajectory through Opial's Lemma, it remains to prove the Fejer property (ii). 
That is, given some $z \in S$, prove that $\lim\limits_{t \to  +\infty} 	\| u(t)- z 	\|$  exists. 

Define  $h(t):=\frac{1}{2}\Vert u(t) - z \Vert^2$ for all $t\geq 0$. 
Since $\dot u$ is absolutely continuous, then $h$ is twice differentiable for a.e. $t \in [0 , + \infty[$, and
\begin{eqnarray}
\dot h(t) &=&\langle \dot u(t), u(t) - z \rangle, \label{E:cr2} \\
\ddot h(t) &=& \langle \ddot u(t),u(t) - z \rangle + \Vert \dot u(t) \Vert^2. \label{E:cr3}
\end{eqnarray}
A linear combination of (\ref{E:cr2}) and (\ref{E:cr3}) gives
\begin{equation}\label{E:cr4}
m \ddot h(t) + \gamma \dot h(t) = m \Vert \dot u(t) \Vert^2 + \langle - m \ddot u(t) - \gamma \dot u(t) , z - u(t) \rangle.
\end{equation}
Let $\theta_i(t) \in \S^q$ be such that $- m \ddot u(t) - \gamma \dot u(t) = {\sum\limits_{i=1}^{q} \theta_i(t)\nabla f_i(u(t))}$, then we can rewrite
\begin{equation}\label{E:cr4.1}
m \ddot h(t) + \gamma \dot h(t) = m \Vert \dot u(t) \Vert^2 + \sum\limits_{i=1}^{q} \theta_i(t) \langle \nabla f_i(u(t)), z - u(t) \rangle.
\end{equation}
For any $i \in \{1,...,q\}$, we use the monotone property of $\E_i$ and $z \in S$ (recall that $\E_i^\infty=\lim\limits_{t \to + \infty} f_i(u(t))$) together with the convexity of $f_i$, to obtain for all $t \in [0,+\infty[$ :
\begin{eqnarray}
\E_i(t) &=& f_i(u(t)) + \frac{m}{\gamma} (f_i \circ u)'(t) +  m\Vert \dot u(t) \Vert^2 \label{E:cr5} \\
& \geq & \E_i^\infty \geq f_i(z) \geq f_i(u(t)) + \langle \nabla f_i (u(t)) , z - u(t) \rangle. \nonumber
\end{eqnarray}
Thus, it follows from (\ref{E:cr4.1}) and (\ref{E:cr5}) that 
\begin{equation}\label{E:cr7}
m \ddot h(t) + \gamma \dot h(t) \leq 2m \Vert \dot u(t) \Vert^2 +  \frac{m}{\gamma} \sum\limits_{i=1}^{q} \theta_i(t) (f_i \circ u)'(t),
\end{equation}
where the right member of (\ref{E:cr7}) lies in $L^1(t_0,+\infty;\H)$ (see Proposition \ref{P:energy estimations}).

\if{
We observe in the second member of (\ref{E:cr7}) that $\dot u \in  L^2(0,+\infty;H)$, see Proposition \ref{P:energy estimations}. Moreover, 
\begin{eqnarray}
\vert \sum\limits_{i=1}^{q} \theta_i(t) (f_i \circ u)'(t) \vert & = & \vert  \sum\limits_{i=1}^{q} \theta_i(t) \langle \nabla f_i(u(t)) , \dot u(t) \rangle \vert = \vert \langle -m \ddot u(t) - \gamma \dot u(t) , \dot u(t) \rangle  \vert \label{E:cr8} \\
& \leq & m \Vert \ddot u(t) \Vert \Vert \dot u(t) \Vert + \gamma \Vert \dot u(t) \Vert^2 \nonumber,
\end{eqnarray}
so we can also conclude by Proposition \ref{P:energy estimations} that $(\alpha - \varepsilon) \sum\limits_{i=1}^{q} \theta_i (f_i \circ u)' \in L^1(0,+\infty;\R)$. 
}\fi

Thus, hypothesis of Lemma \ref{L:Alvarez} is satisfied, and $\lim\limits_{t \to  +\infty} h(t)$ exists. 
It follows from Opial's Lemma that $u(t)$ weakly converges to some $u^\infty \in S$. It remains to prove that $u^\infty$ is a weak Pareto. 
In (IMOG), we have $-m \ddot u(t) - \gamma  \dot u(t) \in \co \{\nabla f_i(u(t)) \}$, where $w-\lim\limits_{t \to +\infty} u(t) = u_\infty$ and $\underset{t \to +\infty}{\text{liminfess}} \ \Vert m \ddot u(t) + \gamma  \dot u(t) \Vert=0$ (see Proposition \ref{P:energy estimations}).
Then we can apply Lemma \ref{L:demiclosed graph} to get $0 \in \co \{\nabla f_i(u^\infty) \}$. Following Proposition \ref{P:Fermat CNS}, this is equivalent for $u_\infty$ to be a weak Pareto point.
\end{proof}

\begin{remark}
If the objective functions are not convex, we still can say something on the limits points: each weak limit point of a bounded trajectory  of (IMOG) is a critical Pareto point (see Proposition \ref{P:energy estimations} and \ref{L:demiclosed graph}). 
\end{remark}

\section{Conclusion}

\if{
Here should appear
\begin{itemize}
	\item simple example(s), to validate and illustrate the previous results.
	\item example with complicate Pareto set, to show the exploration property of the dynamic (VS 1st order? VS scalarization?) cf paper of Miglioni
	\item state that convergence rate are the same that first-order on an easy example. Better rate should be available with friction of order $\frac{1}{t}$.
\end{itemize}
}\fi

{
We presented an inertial continuous dynamic for  multi-objective optimization, namely $\IMOG$.
We have shown the existence of global trajectories for $\IMOG$, and their asymptotic convergence in the convex case to weak Pareto points.
The general problem of the uniqueness of these trajectories remains open, as for the first-order dynamic (see \cite{AttGou14,AttGarGou15}).
Our study was motivated by the fact that inertial methods usually produce trajectories that converge more quickly than first-order methods.
It would now be interesting to study the rate of convergence of the trajectories of $\IMOG$ to weak Pareto points.
Given the recent results of \cite{CEG,SBC}, it seems natural to consider a modified version of $\IMOG$, allowing the viscosity parameter $\gamma$ to be time-dependent.
In particular, a dependence of the type $\gamma(t) =\frac{\alpha}{t}$ should open the road to FISTA-like algorithms for the resolution of multi-objective optimization problems.
Of course, these questions are out of the scope of this paper, and should be treated in a future work.

}

\appendix

\section{}

We give here the two integral forms of Gronwall's Lemma that we used in the proof of Theorem \ref{T:global existence}.
They can be found in Brezis's book \cite[Lemma A.4 \& Lemma A.5, pp. 156--157]{Br}.

\begin{lemma}[Gronwall-Bellman]\label{L:Gronwall-Bellman}
Let $t_0 \in \R$ and $T \in ]t_0,+\infty[$. 
Let $a \in [0,+ \infty[$, and $g\in L^1([0,T],\R)$ with $g(t)\geq 0$ for a.e. $t \in [0,T]$.
Let  $h \in C([0,T],\R)$ such that
\begin{equation}\label{E:Gronwall condition}
h(t) \leq a + \int_{t_0}^t g(s) h(s) \ ds \ \text{ for all } t \in [t_0,T].
\end{equation}
Then $h(t) \leq a e^{\int_{t_0}^t g(s) \ ds}$ for all $t \in [t_0,T]$.
\end{lemma}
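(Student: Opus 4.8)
The plan is to reduce the integral inequality to a differential inequality for the function that appears under the integral sign. Define
\[
H(t) := a + \int_{t_0}^t g(s) h(s)\, ds \quad \text{for } t \in [t_0, T].
\]
Then $H$ is absolutely continuous on $[t_0,T]$ (it is the integral of an $L^1$ function), $H(t_0)=a$, and for a.e.\ $t$ we have $H'(t) = g(t) h(t)$. By hypothesis $h(t) \le H(t)$, and since $g(t) \ge 0$ a.e., this gives $H'(t) = g(t) h(t) \le g(t) H(t)$ for a.e.\ $t \in [t_0,T]$. This is the key reduction: the original inequality for $h$ becomes a linear differential inequality for $H$.

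Next I would integrate this differential inequality using the standard integrating-factor trick. Set $G(t) := \int_{t_0}^t g(s)\, ds$, which is absolutely continuous and nonnegative, and consider $\psi(t) := H(t) e^{-G(t)}$. Since both factors are absolutely continuous and bounded on the compact interval $[t_0,T]$, the product $\psi$ is absolutely continuous, and for a.e.\ $t$,
\[
\psi'(t) = \big(H'(t) - g(t) H(t)\big) e^{-G(t)} \le 0,
\]
using $H'(t) \le g(t) H(t)$ and $e^{-G(t)} > 0$. Hence $\psi$ is nonincreasing on $[t_0,T]$, so $\psi(t) \le \psi(t_0) = H(t_0) = a$ for all $t \in [t_0,T]$. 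Unwinding, $H(t) \le a\, e^{G(t)}$, and therefore
\[
h(t) \le H(t) \le a\, e^{\int_{t_0}^t g(s)\, ds} \quad \text{for all } t \in [t_0,T],
\]
which is the claim.

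The only mild subtlety, rather than a genuine obstacle, is the measure-theoretic bookkeeping: one must justify that $H$ (and hence $\psi$) is absolutely continuous so that the fundamental theorem of calculus applies and the a.e.\ differential inequality can be integrated to a pointwise inequality. This is immediate from $g \in L^1$ and $h$ continuous (so $gh \in L^1$), which makes $H$ an indefinite integral; the product rule for absolutely continuous functions then handles $\psi$. No sign issues arise because $g \ge 0$ a.e.\ and the exponential is strictly positive. Everything else is the classical one-line integrating-factor argument.
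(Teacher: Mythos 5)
Your proof is correct and is the classical integrating-factor argument; the paper itself does not prove this lemma but simply cites Brezis \cite[Lemma A.4]{Br}, where essentially the same reduction ($H' = gh \le gH$ a.e., then differentiate $H e^{-\int g}$) is used. The only cosmetic remark is that the lemma's statement mixes $[0,T]$ and $[t_0,T]$; you sensibly work on $[t_0,T]$ throughout, which is the intended reading.
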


\begin{lemma}\label{L:Gronwall 2}
Let $t_0 \in \R$ and $T \in ]t_0,+\infty[$. 
Let $a \in [0,+ \infty[$, and $g\in L^1([0,T],\R)$ with $g(t)\geq 0$ for a.e. $t \in [0,T]$.
Let  $h \in C([0,T],\R)$ such that
\begin{equation}\label{E:Gronwall condition 2}
\frac{1}{2} h^2(t) \leq \frac{a^2}{2} + \int_0^t g(s)h(s) \ ds \ \text{ for all } t \in [0,T],
\end{equation}
then $\vert h(t) \vert \leq a + \int_0^T g(s) \ ds \ \text{ for all } t \in [0,T].$
\end{lemma}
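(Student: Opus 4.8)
The plan is to reduce this quadratic integral inequality to a linear one by a square-root substitution, and then integrate. First I would set $\varphi(t) := \tfrac{a^2}{2} + \int_0^t g(s)h(s)\,ds$, so that the hypothesis \eqref{E:Gronwall condition 2} reads $\tfrac12 h^2(t) \le \varphi(t)$. Two facts follow immediately: since $\tfrac12 h^2 \ge 0$ we have $\varphi \ge 0$; and since $h$ is continuous and $g \in L^1([0,T],\R)$, the product $gh$ is integrable, so $\varphi$ is absolutely continuous on $[0,T]$ with $\varphi'(t) = g(t)h(t)$ for a.e. $t$.

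To avoid dividing by zero at points where $\varphi$ vanishes, I would regularize: fix $\eps>0$ and put $\varphi_\eps := \varphi + \eps$, which satisfies $\varphi_\eps \ge \eps > 0$ and has the same a.e. derivative $\varphi_\eps' = gh$. From $\tfrac12 h^2 \le \varphi \le \varphi_\eps$ we obtain the pointwise bound $|h(t)| \le \sqrt{2\varphi_\eps(t)}$, and hence, using $g \ge 0$ a.e., the differential inequality
\[
\varphi_\eps'(t) = g(t)h(t) \le g(t)\,|h(t)| \le g(t)\sqrt{2\varphi_\eps(t)} \quad \text{for a.e. } t \in [0,T].
\]

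Next I would introduce $\psi_\eps := \sqrt{2\varphi_\eps}$. Because $h$ is continuous and $g \in L^1$, $\varphi_\eps$ is bounded on $[0,T]$ and stays in a compact subinterval of $(0,+\infty)$, on which $x \mapsto \sqrt{x}$ is Lipschitz; composing an absolutely continuous function with a Lipschitz one, $\psi_\eps$ is itself absolutely continuous, and the chain rule gives $\psi_\eps'(t) = \varphi_\eps'(t)/\sqrt{2\varphi_\eps(t)} \le g(t)$ for a.e. $t$. Integrating on $[0,t]$ and using $\psi_\eps(0) = \sqrt{2\varphi_\eps(0)} = \sqrt{a^2 + 2\eps}$ yields $\sqrt{2\varphi_\eps(t)} \le \sqrt{a^2 + 2\eps} + \int_0^t g(s)\,ds$. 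Combining with $|h(t)| \le \sqrt{2\varphi_\eps(t)}$ and letting $\eps \to 0^+$ gives $|h(t)| \le a + \int_0^t g(s)\,ds$, and bounding $\int_0^t g \le \int_0^T g$ produces exactly the asserted estimate.

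The only genuinely delicate points are the measure-theoretic justifications—absolute continuity of $\varphi$ and of $\psi_\eps$, and the validity of the a.e. chain rule for the square root—together with the possible vanishing of $\varphi$, which is precisely what the $\eps$-regularization and the subsequent passage to the limit are designed to handle; everything else is routine. (I read the integrals as starting at the lower endpoint displayed in \eqref{E:Gronwall condition 2}, namely $0$; the argument is verbatim the same if one integrates from $t_0$ instead.)
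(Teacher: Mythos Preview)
Your argument is correct and is precisely the classical one: the paper does not actually prove this lemma but simply cites Br\'ezis \cite[Lemma A.5]{Br}, and the proof there proceeds exactly as you do---introduce $\varphi(t)=\tfrac{a^2}{2}+\int_0^t gh$, regularize by adding $\eps>0$ to avoid division by zero, differentiate $\sqrt{2\varphi_\eps}$, integrate the resulting inequality $\psi_\eps'\le g$, and let $\eps\downarrow 0$. Your treatment of the measure-theoretic details (absolute continuity of $\varphi_\eps$ and of its square root via Lipschitz composition on a compact range) is more explicit than what Br\'ezis writes, but the approach is identical.
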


\end{document}